\DeclareMathAlphabet{\mathcal}{OMS}{cmsy}{m}{n}
\theoremstyle{plain}
\newtheorem{theorem}{Theorem}[section]
\newtheorem{definition}[theorem]{Definition}
\newtheorem{lemma}[theorem]{Lemma}
\newtheorem{proposition}[theorem]{Proposition}
\theoremstyle{definition}
\newtheorem{remark}[theorem]{Remark}
\newcommand{\R}{\mathbb{R}}
\newcommand{\N}{\mathbb{N}}
\newcommand{\bP}{\mathbb{P}}
\newcommand{\bI}{\mathbf{I}}
\newcommand{\p}{\partial}
\newcommand{\norm}[1]{\lVert #1 \rVert}
\newcommand{\dto}{\downarrow}
\newcommand{\wto}{\rightharpoonup} 
\newcommand{\wstar}{\overset{\ast}{\rightharpoonup}}
\newcommand{\loc}{{\rm loc}}
\let\div\relax
\DeclareMathOperator{\div}{div}
\let\tilde\relas
\newcommand{\tilde}[1]{\widetilde{#1}}
\DeclareMathOperator*{\esssup}{ess\,sup}
\newcommand{\BMO}{{\rm BMO}}
\numberwithin{equation}{section}
\title{On local Type~I singularities of the Navier-Stokes equations and Liouville theorems}
\author{Dallas Albritton\footnote{School of Mathematics, University of Minnesota, Minneapolis, MN \\Email address: \texttt{albri050@umn.edu}}, Tobias Barker\footnote{DMA, {\'E}cole Normale Sup{\'e}rieure, CNRS, PSL Research University, 75005 Paris \\Email address: \texttt{tobiasbarker5@gmail.com}}}
\date{\today}
\begin{document}
\maketitle

\begin{abstract}
We prove that suitable weak solutions of the Navier-Stokes equations exhibit Type~I singularities if and only if there exists a non-trivial mild bounded ancient solution satisfying a Type~I decay condition. The main novelty is in the reverse direction, which is based on the idea of zooming out on a regular solution to generate a singularity.
By similar methods, we prove a Liouville theorem for ancient solutions of the Navier-Stokes equations bounded in $L^3$ along a backward sequence of times.
\end{abstract}

\section{Introduction}
In this paper, we consider potential singularities of the Navier-Stokes equations from the perspective of Liouville theorems. The main idea is to ``zoom in" on the singularity and classify the limiting objects. This approach is highly effective in the regularity theory of minimal surfaces~\cite{degiorgi65}, semilinear heat equations~\cite{gigakohnasymptoticallyselfsimilar}, harmonic maps~\cite{schoenuhlenbeckregularitytheory}, and many other PDEs.

Unlike the above examples, the three-dimensional Navier-Stokes equations have no known critical conserved quantities or monotonicity formulae. Because of this issue, Type~I conditions are typically imposed on the solutions; that is, we often ask that a critical quantity is finite near the singularity. For example, in the famous paper~\cite{escauriazasereginsverak}, Escauriaza, Seregin, and {\v S}ver{\'a}k demonstrated, via Liouville theorems, that $L^\infty_t L^3_x$ solutions do not form singularities. The axisymmetric case is exceptional because $r v_\theta$ satisfies a maximum principle, and in this case, Seregin and {\v S}ver{\'a}k proved that interior Type~I blow-up does not occur~\cite{sereginsverakaxisymmetric}. Liouville theorems were also used by Tsai in~\cite{tsai} and other authors (see~\cite{necasruzsverak,chaewolfasymptoticallyselfsimilar,guevaraphucselfsim}) to exclude self-similar singularities in quite general situations. However, many questions concerning feasible Type~I scenarios, e.g., discretely self-similar blow-up, remain completely open.  We refer to~\cite{sereginshilkinliouvillesurvey} for a recent survey of regularity results based on Liouville theorems.

A central object in the Liouville theory is the class of \emph{mild bounded ancient solutions}, which arise naturally as ``blow-up limits" of singular solutions (see~\cite{kochnadirashvili}).
These are defined to be solutions which satisfy the integral equation formulation of the Navier-Stokes equations and are bounded (in fact, smooth) for all backward times. The assumption that the solution is mild simply excludes the ``parasitic solutions" $v=\vec{c}(t)$, $q = -\vec{c} \,'(t) \cdot x$. At a conceptual level, classifying mild bounded ancient solutions serves to determine the possible model solutions on which a Navier-Stokes singularity must be based.

In~\cite{kochnadirashvili}, G. Koch, Seregin, {\v S}ver{\'a}k, and Nadirashvili conjectured that \emph{mild bounded ancient solutions are constant}.
Remarkably, the same authors proved that this is true in two dimensions and in the axisymmetric case without swirl (see Theorems 5.1-5.2 therein). A special case of the conjecture was recently verified by Lei, Ren, and Zhang in~\cite{lei2019ancient} when the solution is axisymmetric and periodic in the $z$-variable (see also~\cite{carrillo2018decay}). If true, the conjecture excludes Type~I singularities and implies that $D$-solutions of the steady Navier-Stokes equations are constant.\footnote{This is in contrast to the focusing semilinear heat equation $\p_t u - \Delta u = |u|^{p-1} u$, for which there is a non-trivial ground state whenever $p \geq p_c := \frac{n+2}{n-2}$.} If there is a counterexample to the conjecture, it is conceivable that it already occurs in the axisymmetric class. 

We are interested in a weak version of the above conjecture obtained by restricting to mild bounded ancient solutions having Type~I decay in backward time. With this modification, we can clarify the relationship between these solutions and Type~I singularities:




\begin{theorem}
\label{thm:localver}
The following are equivalent:
\begin{itemize}
	\item There exists a suitable weak solution with Type I singular point.
	\item There exists a non-trivial mild bounded ancient solution with $\bI < \infty$.
\end{itemize}
\end{theorem}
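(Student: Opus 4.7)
The plan is to prove the two implications of the equivalence separately. The forward direction is the classical zoom-in construction, while the reverse direction is the novel zoom-out argument advertised in the abstract.

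\emph{Forward direction} (singular point implies non-trivial MBA). Let $v$ be a suitable weak solution with a Type~I singular point at $(0,0)$, so that $\sqrt{-t}|v(x,t)| \leq M$ on some parabolic neighborhood. Perform the parabolic zoom-in $v_\lambda(x,t) := \lambda v(\lambda x, \lambda^2 t)$ along a sequence $\lambda_k \dto 0$. The Type~I bound is scale-invariant, so the family is uniformly controlled by $M/\sqrt{-t}$; standard local regularity for Navier--Stokes (interior estimates conditional on an $L^\infty$ bound) upgrades this to uniform smoothness on compact subsets of $\R^3 \times (-\infty, 0)$. Extract a $C^\infty_\loc$ subsequential limit $u$: it is smooth, bounded, and divergence-free, inherits $\bI < \infty$, and its pressure is recovered uniquely via Calder\'on--Zygmund from the $L^\infty$ bound, so $u$ is mild. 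Non-triviality comes from $\varepsilon$-regularity: since $(0,0)$ is singular for $v$, a scale-invariant local energy exceeds a universal $\varepsilon_0 > 0$ at every scale $\lambda_k$, and this lower bound passes to the limit.

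\emph{Reverse direction} (non-trivial MBA implies singular point). Let $u$ be a non-trivial mild bounded ancient solution with $\bI < \infty$, so $|u(x,t)| \leq \bI/\sqrt{-t}$. Observe first that $u$ itself is a smooth suitable weak solution on each strip $\R^3 \times (-T, 0)$. If $u$ fails to be regular at some point $(x_\ast, 0)$, the Type~I bound already certifies $(x_\ast, 0)$ as a Type~I singular point and we are done. The crux is therefore to exclude the alternative that $u$ extends smoothly across $t=0$.

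To rule this alternative out, zoom out: set $u_\Lambda(x,t) := \Lambda u(\Lambda x, \Lambda^2 t)$ and let $\Lambda \to \infty$. The Type~I bound is preserved, so subsequential $C^\infty_\loc$ limits $U$ exist on $\R^3 \times (-\infty, 0)$. By construction the limit is invariant under parabolic rescaling, hence backward self-similar; by a Tsai-type theorem in the Type~I class, $U$ must vanish. Quantitatively this forces $\Lambda u(\Lambda x, \Lambda^2 t) \to 0$ locally smoothly, giving improved asymptotic decay of $u$ along parabolic scales $|x| \sim \sqrt{-t}$ as $t \to -\infty$. Combining this refined decay with backward uniqueness for Navier--Stokes (in the style of Escauriaza--Seregin--{\v S}ver{\'a}k) should then force $u \equiv 0$, contradicting non-triviality. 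Hence $u$ must have a Type~I singular point.

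\emph{Main obstacle.} The difficult step is the reverse direction, specifically the passage from ``the zoom-out limit vanishes'' to ``the original $u$ vanishes.'' The self-similar blow-up provides only asymptotic information on a parabolic family of scales, and converting this into a global rigidity statement requires the backward uniqueness and unique continuation machinery to bridge the gap. If the direct self-similar route is insufficient, an alternative is to bypass $u$ itself and use it as a seed to construct a \emph{distinct} suitable weak solution with a Type~I singular point, e.g.\ by truncating $u$ in space to produce localized initial data and then solving forward; controlling the propagation of the singular profile through the truncation, while preserving non-triviality and the Type~I structure, is the principal technical difficulty in that variant.
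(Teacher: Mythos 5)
Your forward direction is in the right spirit (it is the known Seregin--{\v S}ver{\'a}k zoom-in, which the paper simply cites), though note one mismatch: you assume the pointwise bound $\sqrt{-t}\,|v(x,t)|\leq M$, which is the notion~\eqref{eq:TypeIinspacetime} and is \emph{not} implied by the paper's definition of Type~I, namely $\bI(Q')<\infty$. The paper avoids this by using the regular cylinder lemma (Proposition~\ref{pro:goodsingulartime}) and the rescaling procedure of~\cite{sereginsverakaxisymmetric,sereginsverakhandbook}, which produces a non-trivial mild bounded ancient solution without any pointwise Type~I input.

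The reverse direction, however, has a fundamental structural flaw. You set up a dichotomy: either $u$ is singular at some $(x_\ast,0)$, or $u$ extends regularly and you try to derive $u\equiv 0$ for a contradiction. But a mild bounded ancient solution is by definition essentially bounded on $\R^3\times(-\infty,0)$, so it is bounded in every parabolic ball $Q(z,r)=B(x,r)\times\,]t-r^2,t[$ with $t\leq 0$ and therefore has \emph{no} singular points; your first alternative is vacuous. You are then left trying to prove that every non-trivial mild bounded ancient solution with $\bI<\infty$ is identically zero --- i.e., the Type~I case of the Koch--Nadirashvili--Seregin--{\v S}ver{\'a}k conjecture, which is open and which, combined with the forward direction, would show Type~I singularities do not exist. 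The intermediate steps also do not hold up: a subsequential zoom-out limit $U=\lim_{\Lambda\to\infty}\Lambda u(\Lambda x,\Lambda^2 t)$ is not ``invariant under parabolic rescaling'' (that would require a monotonicity formula, which the Navier--Stokes equations lack), and backward uniqueness needs vanishing on a time slice, not decay along parabolic scales as $t\to-\infty$. The point you are missing is that the theorem does not ask $u$ itself to be singular: the singular object is the \emph{limit} of the zoom-out. The paper normalizes $\norm{v}_{L^\infty(Q)}=N>0$, sets $v^{(k)}(x,t)=k\,v(kx,k^2t)$, uses the scale invariance of $\bI$ plus Lemma~\ref{lem:compactnessforsuitable} to extract a limiting suitable weak solution $(u,p)$ with $\bI(u)<\infty$, and observes that $\norm{v^{(k)}}_{L^\infty(Q(1/k))}=kN\to\infty$; the Rusin--{\v S}ver{\'a}k persistence of singularities (Proposition~\ref{pro:appearanceofsingularity}) then forces the limit to be singular at the origin. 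No self-similarity, Tsai-type rigidity, or backward uniqueness is needed, and your proposed ``truncate and solve forward'' fallback is likewise not required.
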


The relevant terminology will be defined below, as there is some sublety in the formulation of Type~I, see~\eqref{eq:typeinotion}. The quantity $\bI$ is defined in~\eqref{eq:idef}. For suitable weak solutions, see Definition~\ref{suitabledef}.

The main novelty of Theorem~\ref{thm:localver} is in the reverse direction. Our idea is to zoom out on an ancient (but regular) solution to generate a singular solution. This is known as the ``blow-down limit" in free boundary problems, and it has not yet been exploited in the Navier-Stokes literature.  Our primary tools are known and consist of estimates in Morrey spaces and the persistence of singularities introduced by Rusin and {\v S}ver{\'a}k in~\cite{rusinsver}. In principle, constructing ancient solutions with Type~I decay is a (difficult) route to obtaining Navier-Stokes singularities.

We will use a rather weak notion of Type~I in terms of the rescaled energy.
Let $z = (x,t) \in \R^{3+1}$, $Q(z,r) = B(x,r) \times ]t-r^2,t[$ be a parabolic ball, $Q' = Q(z,r)$, and
\begin{equation}
	A(Q') = \esssup_{t-r^2<t'<t} \frac{1}{r} \int_{B(x,r)} |v(x',t')|^2 \, dx',
\end{equation}
\begin{equation}
	C(Q') =  \frac{1}{r^2} \int_{Q(z,r)} |v|^3 \, dz',
\end{equation}
\begin{equation}
	D(Q') = \frac{1}{r^2} \int_{Q(z,r)} \left\lvert q-[q]_{x,r}(t') \right\rvert^{\frac{3}{2}} \, dz',
\end{equation}
\begin{equation}
	E(Q') = \frac{1}{r} \int_{Q(z,r)} |\nabla v|^2 \, dz'.
\end{equation}
If $\omega \subset R^{3+1}$ is open and $(v,q)$ is defined on $\omega$, then
\begin{equation}
	\label{eq:idef}
	\bI(\omega) = \sup_{Q' \subset \omega} A(Q') + C(Q') + D(Q') + E(Q')
\end{equation}
If $\omega$ is unspecified, we use $\omega = \R^3 \times \R_-$. Together, $v \equiv \text{const.}$ and $\bI < \infty$ imply $v \equiv 0$.

If $v$ is not essentially bounded in any parabolic ball centered at $z$, we say that $z$ is a \emph{singular point}.
Finally, if there exists a parabolic ball $Q'$ centered at the singular point $z$ and
\begin{equation}
	\label{eq:typeinotion}
	\bI(Q') < \infty,
\end{equation}
then we say that $z$ is a \emph{Type I} singularity.

Observe that~\eqref{eq:typeinotion} is adapted to the minimal requirements needed to make sense of the local energy inequality and partial regularity theory. In particular, $\bI(Q') \ll 1$ implies regularity. Our notion is also natural because it follows from boundedness of a variety of quantities considered to be Type~I in the literature, e.g.,
\begin{equation}
	\label{eq:TypeIinspacetime}
	\tag{a}
	\sup_{x,t} \left( |x^*-x| + \sqrt{T^*-t} \right) |v(x,t)|,
\end{equation}
\begin{equation}
	\label{eq:TypeIweakL3}
	\tag{b}
	\sup_{t} \norm{v(\cdot,t)}_{L^{3,\infty}},
\end{equation}
\begin{equation}
	\label{eq:TypeILp}
	\tag{$\text{c}_p$}
	\sup_t (T^*-t)^{\frac{1}{2}-\frac{3}{2p}} \norm{v(\cdot,t)}_{L^p},
\end{equation}
\begin{equation}
	\label{eq:TypeILinfty}
	\tag{$\text{c}_\infty$}
	\sup_{t} \sqrt{T^*-t} \norm{v(\cdot,t)}_{L^{\infty}},
\end{equation}
in the class of suitable weak solutions, see Lemma~\ref{lem:weakserrin}. In Theorem~\ref{thm:localvertypei}, we prove a version of Theorem~\ref{thm:localver} in the context of~\eqref{eq:TypeIinspacetime}-\eqref{eq:TypeILp} ($3 < p < \infty$) using Calder{\'o}n-type energy estimates, introduced in~\cite{calderon90}. Historically,~\eqref{eq:TypeILinfty} has been considered important, in part due to its success in the work of Giga and Kohn (see~\cite{gigakohnasymptoticallyselfsimilar}). However, an important distinction is that~\eqref{eq:TypeILinfty} is not well suited to the reverse direction, see Remark~\ref{rmkonpinfty}. Note that boundedness of one of~\eqref{eq:TypeIinspacetime}-\eqref{eq:TypeILinfty} is not known to imply boundedness of the other quantities.\footnote{Moreover, it does not appear to hold for other equations, e.g., the harmonic map heat flow or parabolic-elliptic Keller-Segel system in two dimensions. However, in the context of mild solutions, one may say that~\eqref{eq:TypeILp} for $p_1$ implies~\eqref{eq:TypeILp} for $p_2 \geq p_1$ (in a slightly smaller time interval), and in particular, implies~\eqref{eq:TypeILinfty}. Clearly,~\eqref{eq:TypeIinspacetime} implies~\eqref{eq:TypeIweakL3}. Of course, many more quantities are possible, e.g., space-time Lorentz norms, quantities involving the vorticity, quantities involving Besov spaces (see~\cite{sereginzhou2018}), etc.}
 \\

In this paper, we also prove a Liouville theorem for ancient solutions with Type~I decay along a backward sequence of times. The Liouville theorem of Escauriaza, Seregin, and {\v S}ver{\'a}k in~\cite{escauriazasereginsverak} states that an ancient suitable weak solutions in $L^\infty_t L^3_x$ vanishing identically at time $t=0$ must be trivial. It is natural to ask whether the condition on vanishing can be removed; is a mild ancient solution in $L^\infty_t L^3_x$ necessarily zero? Yes, since estimates of the form 
\begin{equation}
\label{eq:exampleest}
	\norm{v}_{L^\infty(Q(R/2))} \leq R^{-1} f \big( \norm{v}_{L^\infty_t L^3_x(Q(R))} \big)
\end{equation}
were considered by Dong and Du in~\cite{dongducritical}, 
where $f > 0$ is an increasing function. Hence, one may simply allow $R \to \infty$ in~\eqref{eq:exampleest}.\footnote{We thank Hongjie Dong for informing us of this proof. It is possible to prove~\eqref{eq:exampleest} using a compactness argument, persistence of singularities, and the local regularity result for $L^\infty_t L^3_x(Q)$ solutions in~\cite{escauriazasereginsverak}.
A similar Liouville theorem was proven in~\cite{sereginliouvillehalfspacel2infty} for ancient solutions in $L^\infty_t L^2_x(\R^2_+ \times ]-\infty,0[)$ by duality methods.} 
 On the other hand, the analogous result along a sequence of times is less obvious, and we prove it in the sequel:
\begin{theorem}
\label{thm:liouville}
If  $v$ is a mild ancient solution satisfying
\begin{equation}
	\sup_{k \in \N} \norm{v(\cdot,t_k)}_{L^3} < \infty
\end{equation}
 for a sequence of times $t_k \dto -\infty$, then
\begin{equation}
	v \equiv 0.
\end{equation}
\end{theorem}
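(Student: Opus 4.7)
The plan is to reduce Theorem~\ref{thm:liouville} to the $L^\infty_t L^3_x$ Liouville statement noted in the footnote, which is an immediate consequence of the Dong--Du bound~\eqref{eq:exampleest} upon sending $R\to\infty$. The key is to construct, from the discrete-time $L^3$ hypothesis, a limiting mild ancient solution lying in $L^\infty_t L^3_x$ to which the Dong--Du bound applies, and then to transfer its vanishing back to $v$.

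I would first invoke Kato's critical local well-posedness for mild Navier--Stokes solutions with $L^3$ initial data, together with uniqueness of mild solutions, to propagate the bound forward from each $t_k$: there exists $\tau=\tau(M)>0$, independent of $k$, with $\|v\|_{L^\infty_t L^3_x(\R^3\times(t_k,t_k+\tau))}\le CM$, where $M := \sup_k \|v(\cdot,t_k)\|_{L^3}$. Combined with the local regularity of Escauriaza--Seregin--{\v S}ver{\'a}k for $L^\infty_t L^3_x$ solutions, $v$ is smooth and uniformly bounded by $C_1(M)$ on the slab $\R^3\times(t_k+\tau/2,t_k+\tau)$. The time-translated solutions $v_k(x,t):=v(x,t+t_k+\tau/2)$ are therefore mild ancient solutions, uniformly smooth on the fixed slab $\R^3\times(-\tau/2,\tau/2)$, with uniform $L^3$ control at every time in that slab.

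By parabolic compactness, using the NSE to bootstrap derivative bounds on compact sets, I extract a subsequential limit $v_\infty$ on $\R^3\times(-\tau/2,\tau/2)$. Repeating the forward propagation at every other $t_j$ provides uniform $L^3\cap L^\infty$ bounds for $v_k$ on the further translated slabs $(t_j-t_k,t_j-t_k+\tau/2)$, which tile a cofinal subset of $\R_-$ as $k\to\infty$. A diagonal extraction, combined with forward Kato chaining across neighboring controlled slabs, upgrades $v_\infty$ to a mild ancient solution on $\R^3\times\R$ lying in $L^\infty_t L^3_x(\R^3\times\R_-)$. Applying~\eqref{eq:exampleest} to $v_\infty$ with $R\to\infty$ then forces $v_\infty\equiv 0$.

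Finally, since $v_k\to 0$ locally uniformly, one obtains $v(x,t+t_k+\tau/2)\to 0$ uniformly on compact sets of $(x,t)$; that is, $v$ vanishes uniformly on compact sets as time tends to $-\infty$. Combined with the mild-ancient integral equation $v(t)=-\int_{-\infty}^t e^{(t-s)\Delta}\bP\nabla\cdot(v\otimes v)(s)\,ds$, whose linear contribution is required to vanish at $-\infty$ for mild ancient solutions, this decay propagates to $v\equiv 0$ on $\R^3\times\R_-$. The main obstacle is the construction of the ancient limit $v_\infty$ in $L^\infty_t L^3_x$: the Kato windows following each $t_k$ have fixed length $\tau(M)$, yet the spacing of the $t_k$ is arbitrary, so the extraction must carefully interleave subsequences and exploit Kato's continuous dependence to bridge the gaps between the uniformly controlled slabs.
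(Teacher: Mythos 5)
There is a genuine gap at the very first step, and it is fatal to the whole architecture of your argument. You assert that Kato's local theory gives an existence time $\tau=\tau(M)>0$ depending only on $M=\sup_k\norm{v(\cdot,t_k)}_{L^3}$, uniformly in $k$. This is not available: $L^3$ is a \emph{critical} space, so the Kato existence time depends on the profile of the datum (e.g., on how fast $t^{1/4}\norm{e^{t\Delta}u_0}_{L^6}$ tends to zero), not only on its norm. Indeed, if a lower bound $\tau(\norm{u_0}_{L^3})$ existed, then applying it to the rescalings $u_\lambda(x,t)=\lambda u(\lambda x,\lambda^2 t)$, whose data have the same $L^3$ norm, would force every $L^3$ datum to generate a global smooth solution --- i.e., it would resolve the global regularity problem for $L^3$ data. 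Since the profiles $v(\cdot,t_k)$ may concentrate as $k\to\infty$, there is no uniform slab $(t_k,t_k+\tau)$ on which you control $v$ in $L^\infty_t L^3_x$, and everything downstream (the uniform $L^\infty$ bounds via Escauriaza--Seregin--{\v S}ver{\'a}k, the tiling and chaining of slabs, the ancient limit $v_\infty\in L^\infty_t L^3_x$ to which the Dong--Du estimate is applied) collapses. A secondary gap: even granting $v_\infty\equiv 0$, your final transfer back to $v$ only yields that $v$ is small, locally uniformly, in time windows near the $t_k$; propagating that to $v\equiv 0$ at all times via the mild integral equation requires a quantitative smallness/Gronwall input over unbounded time intervals, which you do not have.

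The paper's proof is designed precisely to avoid the missing uniform local well-posedness. Instead of translating in time, it \emph{zooms out}: $v^{(k)}(x,t)=\sqrt{|t_k|}\,v(\sqrt{|t_k|}x,|t_k|t)$ places the controlled time $t_k$ at $t=-1$ of a solution on the fixed window $\R^3\times\,]-1,0[$. It then works in the class of global \emph{weak} $L^{3,\infty}$ solutions (which exist for all such data, at the price of possible singularities, so no uniform existence time is needed), extracts a limit $v^\infty$ by the compactness theory of~\cite{barkersereginsverakstability,globalweakbesov}, shows $v^\infty$ is bounded via the backward-uniqueness-based Proposition~\ref{pro:auxiliarypro}, and derives a contradiction with Proposition~\ref{pro:appearanceofsingularity} (persistence of singularities) if $v$ were non-trivial. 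If you want to salvage your reduction to the $L^\infty_t L^3_x$ Liouville theorem, you would need to replace the Kato step by an argument that controls $v$ on a fixed rescaled window using only the single-time $L^3$ bound --- which is exactly what the weak-solution compactness and persistence-of-singularities machinery accomplishes.
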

The proof relies essentially on zooming out and the persistence of singularities, as in Theorem~\ref{thm:localver}. In this case, to control the solution, we use the theory of weak $L^{3,\infty}$ solutions developed in~\cite{sereginsverakweaksols,barkersereginsverakstability}, where $L^{3,\infty} = L^3_{\rm weak}$ is the Lorentz space/weak Lebesgue space. We prove a more quantitative version in Theorem~\ref{thm:moregeneralliouville}.
\\ 

Without Type~I assumptions, it is unclear what the existence of non-constant mild bounded ancient solutions says about the regularity theory. For example, the one-dimensional viscous Burgers equation is easily seen to be regular, but it admits non-constant traveling wave solutions $f(x-ct)$.\footnote{One may obtain other mild bounded ancient solutions of 1d viscous Burgers by solving the backward heat equation using a superposition of solutions $f(x_0) \exp\left[(x-x_0)+t\right]$ and applying the Cole-Hopf transformation.} These solutions are easily upgraded to higher dimensions by writing $u(x,t) = f(x\cdot \vec{n} - ct)$. Regarding Navier-Stokes solutions,  as there are no non-constant mild bounded ancient solutions in two dimensions~\cite{kochnadirashvili}, no such ``upgrade" is possible. The analogous results in the half-space remain open.\footnote{The relevant literature includes~\cite{gigahsumaekawaplaner,sereginsverakrescalinghalfspace,barkersereginmbas,sereginliouvillehalfspacel2infty}. Since the writing of this paper, Seregin has shown an analogue of Theorem~\ref{thm:localver} in the half-space~\cite{seregintypeihalfspace}. We remark that the relationship between various formulations of Type~I is less clear in the half-space.}


\section{Preliminaries}
\label{sec:preliminaries}

In this section, we recall some known facts about suitable weak solutions. We refer to~\cite{escauriazasereginsverak,sereginnotes,sereginsverakaxisymmetric,sereginsverakhandbook} for a review of the partial regularity theory; in particular, \cite{sereginsverakaxisymmetric,sereginsverakhandbook} contain many excellent heuristics.

Let $z = (x,t) \in \R^{3+1}$, $r>0$, and $Q' = Q(z,r)$ a parabolic~ball. We also write $Q(r) = Q(0,r)$ and $Q = Q(1)$.
\begin{definition}[Suitable weak solution]
	\label{suitabledef}
	We say that $(v,q)$ is a \emph{suitable weak solution} in $Q'$ if
	\begin{equation}
	v \in L^\infty_t L^2_x \cap L^2_t H^1_x(Q') \text{ and } q \in L^{\frac{3}{2}}(Q'),
	\end{equation}
	$(v,q)$ satisfies the Navier-Stokes equations on $Q'$ in the sense of distributions,
	\begin{equation}
	\left\lbrace
	\begin{aligned}
	\p_t v - \Delta v + v \cdot \nabla v + \nabla q &= 0 \\
	\div v &= 0
	\end{aligned}
	\right.
	\end{equation}
	 and $(v,q)$ satisfies the local energy inequality,
\begin{equation}
	\label{localenergyienq}
	\int_{B(x,r)} \zeta |v(y,t')|^2 \, dy  + 2 \int_{t-r^2}^{t'} \int_{B(x,r)} \zeta |\nabla v|^2 \, dy \, ds \leq $$ $$ \leq \int_{t-r^2}^{t'} \int_{B(x,r)} |v|^2 (\p_t + \Delta) \zeta + (|v|^2 + 2q) v \cdot \nabla \zeta \, dy \,ds,
\end{equation}
for all non-negative $\zeta \in C^\infty_0(B(x,r) \times ]t-r^2,t])$ and almost every $t' \in ]t-r^2,t]$.\footnote{By weak continuity in time, one may remove the ``almost every" restriction.}

	Finally, we say that $v$ is a suitable weak solution in $Q'$ (without reference to the pressure) if there exists $q \in L^{\frac{3}{2}}(Q')$ such that $(v,q)$ is suitable in $Q'$.
\end{definition}

The following lemma is proven in~\cite[Theorem 2.2]{lin}. The proof relies on the local energy inequality~\eqref{localenergyienq}, the embedding $L^\infty_t L^2_x \cap L^2_t H^1_x(Q) \hookrightarrow L^{\frac{10}{3}}(Q)$, and the Aubin-Lions lemma.
\begin{lemma}[Compactness]
\label{lem:compactnessforsuitable}
Let $(v^{(k)},q^{(k)})_{k \in \N}$ be a sequence of suitable weak solutions on $Q$ satisfying
\begin{equation}
	\sup_{k \in \N} \norm{v^{(k)}}_{L^3(Q)} + \norm{q^{(k)}}_{L^{\frac{3}{2}}(Q)} < \infty.
\end{equation}
Then there exists a suitable weak solution $(u,p)$ on $Q(R)$ for all $0 < R < 1$ such that
\begin{equation}
\label{eq:convergenceofsuitableweaksols}
	v^{(k)} \to u \text{ in } L^{3}_{\loc}(B \times ]-1,0]), \quad q^{(k)} \wto p \text{ in } L^{\frac{3}{2}}_{\loc}(B \times ]-1,0]),
\end{equation}
along a subsequence.
\end{lemma}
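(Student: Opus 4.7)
The plan is to bootstrap the $L^3$ and $L^{3/2}$ hypotheses into good uniform energy-type bounds via the local energy inequality, then invoke Aubin--Lions to get strong $L^3$ compactness, and finally pass to the limit in both the equations and the local energy inequality.

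First, fix $0 < R < R' < 1$ and let $\zeta \in C_0^\infty(Q(R'))$ be a cutoff with $\zeta = 1$ on $Q(R)$ and the standard bounds on $|\p_t \zeta|$, $|\nabla \zeta|$, $|\Delta \zeta|$. Inserting $\zeta$ into the local energy inequality~\eqref{localenergyienq} and estimating the right-hand side by H\"older,
\[
\Big|\int |v^{(k)}|^2(\p_t+\Delta)\zeta\Big| \les \norm{v^{(k)}}_{L^3(Q)}^2, \quad \Big|\int (|v^{(k)}|^2 + 2q^{(k)}) v^{(k)} \cdot \nabla \zeta \Big| \les \norm{v^{(k)}}_{L^3(Q)}^3 + \norm{q^{(k)}}_{L^{3/2}(Q)} \norm{v^{(k)}}_{L^3(Q)},
\]
one obtains a uniform bound on $v^{(k)}$ in $L^\infty_t L^2_x \cap L^2_t H^1_x(Q(R))$. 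The embedding $L^\infty_t L^2_x \cap L^2_t H^1_x \hookrightarrow L^{10/3}$ then yields a uniform bound on $v^{(k)}$ in $L^{10/3}(Q(R))$.

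Second, to get strong compactness of $v^{(k)}$, I use the Navier--Stokes equations to estimate $\p_t v^{(k)}$ in a negative-regularity space: $\Delta v^{(k)} \in L^2_t H^{-1}_x$, $\nabla \cdot (v^{(k)} \otimes v^{(k)}) \in L^{5/3}_t W^{-1,5/3}_x$ (from $v^{(k)} \in L^{10/3}$), and $\nabla q^{(k)} \in L^{3/2}_t W^{-1,3/2}_x$, so $\p_t v^{(k)}$ is uniformly bounded in $L^{3/2}_t H^{-s}_x(Q(R))$ for some large $s$. Combined with the $L^2_t H^1_x$ bound, Aubin--Lions gives, along a subsequence, $v^{(k)} \to u$ in $L^2(Q(R))$. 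Interpolating with the uniform $L^{10/3}$ bound upgrades this to $L^3_{\rm loc}$ convergence, while $q^{(k)} \wto p$ in $L^{3/2}(Q(R))$ follows directly from the uniform bound. A standard diagonal extraction over a sequence $R_n \upto 1$ produces a subsequence that works for every $R < 1$.

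Third, one checks that $(u,p)$ is a suitable weak solution on $Q(R)$. The distributional Navier--Stokes system passes to the limit easily: strong $L^3$ convergence of $v^{(k)}$ handles $v^{(k)} \otimes v^{(k)} \to u \otimes u$ in $L^{3/2}$, weak $L^{3/2}$ convergence of $q^{(k)}$ handles the pressure term, and $\nabla v^{(k)} \wto \nabla u$ in $L^2$ handles the Laplacian. The right-hand side of~\eqref{localenergyienq} passes to the limit by the same strong/weak convergences.

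The one delicate point, which I expect to be the main obstacle, is the left-hand side of the local energy inequality: the term $\int \zeta |\nabla v|^2$ is handled by weak lower semicontinuity in $L^2$, but the pointwise-in-time term $\int_{B(x,r)} \zeta |v(y,t')|^2\,dy$ is not a priori lower semicontinuous under $L^3_{\rm loc}$ convergence. The standard fix is to integrate~\eqref{localenergyienq} against a non-negative test function $\chi(t') \in C_c(]t-r^2,t[)$, use strong $L^3$-in-space-time convergence to pass to the limit in the smeared left-hand side, and then appeal to the weak time-continuity of the limit solution's energy (together with the Lebesgue differentiation theorem in $t'$) to recover the inequality at a.e.\ $t'$. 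This produces the local energy inequality for $(u,p)$ and completes the proof.
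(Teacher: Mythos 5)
Your argument is correct and follows exactly the route the paper indicates for this lemma (which it attributes to Lin's compactness theorem): uniform energy bounds from the local energy inequality, the embedding $L^\infty_t L^2_x \cap L^2_t H^1_x \hookrightarrow L^{10/3}$, and Aubin--Lions, followed by passage to the limit in the equations and in a time-smeared local energy inequality. You also correctly identify and resolve the one delicate point, the pointwise-in-time term on the left-hand side of~\eqref{localenergyienq}.
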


The next proposition is our primary tool. It is contained in Lemma 2.1 and Lemma 2.2 of~\cite{rusinsver}. However, as the statement therein is slightly different, we include a proof for completeness.
\begin{proposition}[Persistence of singularities]
	\label{pro:appearanceofsingularity}
	Let $(v^{(k)},q^{(k)})_{k \in \N}$ be a sequence of suitable weak solutions on $Q$ satisfying \eqref{eq:convergenceofsuitableweaksols}.
	If
	\begin{equation}
		\limsup_{k \to \infty} \; \norm{v^{(k)}}_{L^\infty(Q(R))} = \infty \text{ for all } 0 < R < 1,
	\end{equation}
	then
	\begin{equation}
	u \text{ has a singularity at the space-time origin}.
	\end{equation}
\end{proposition}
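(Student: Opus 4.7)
I would prove this by contradiction, invoking the standard $\varepsilon$-regularity criterion for suitable weak solutions (due to Lin): there exists an absolute $\varepsilon_{\ast} > 0$ such that $C(Q(z,r)) + D(Q(z,r)) < \varepsilon_{\ast}$ implies essential boundedness of the velocity on $Q(z,r/2)$. Suppose the space-time origin were a regular point of $u$. The partial regularity theory then gives some $r_0 \in (0,1)$ on which $(u,p)$ is smooth, so $C(Q(\rho), u)$ and $D(Q(\rho), p)$ can be made as small as desired by taking $\rho$ small. The plan is to transfer this smallness to $(v^{(k)}, q^{(k)})$ along the subsequence: if for some fixed $\rho$ and all large $k$ one has $C + D < \varepsilon_{\ast}$ on $Q(\rho)$, then $v^{(k)}$ is uniformly bounded on $Q(\rho/2)$, contradicting $\limsup_k \norm{v^{(k)}}_{L^\infty(Q(\rho/2))} = \infty$.

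The velocity step is immediate: strong $L^3_{\loc}$ convergence gives $C(Q(\rho), v^{(k)}) \to C(Q(\rho), u)$ as $k \to \infty$. The main obstacle is the pressure, because $q^{(k)} \wto p$ only weakly in $L^{3/2}_{\loc}$ and the $D$-seminorm is not upper semicontinuous under weak convergence, so one cannot directly ``pass to the limit.'' I would bypass this with a standard pressure decomposition. Fix a cutoff $\chi \in C^\infty_c(B(r_0/2))$ with $\chi \equiv 1$ on $B(r_0/4)$ and write
\[
q^{(k)}(\cdot, t) = q_1^{(k)}(\cdot, t) + q_2^{(k)}(\cdot, t), \qquad q_1^{(k)}(\cdot, t) := R_i R_j\bigl(\chi\, v_i^{(k)} v_j^{(k)}(\cdot, t)\bigr),
\]
where $R_iR_j$ are the double Riesz transforms in $x$. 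From the identity $-\Delta q^{(k)} = \p_i \p_j (v_i^{(k)} v_j^{(k)})$ in space, $q_2^{(k)} := q^{(k)} - q_1^{(k)}$ is harmonic in $x$ on $B(r_0/4)$ for a.e.\ $t$.

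For $q_1^{(k)}$, Calder\'on-Zygmund boundedness and strong $L^3_{\loc}$ convergence of $v^{(k)}$ yield strong $L^{3/2}_{\loc}$ convergence $q_1^{(k)} \to q_1 := R_iR_j(\chi u_i u_j)$; since $u$ is smooth on $Q(r_0)$, $q_1$ is smooth near the origin, so $D(Q(\rho), q_1) \to 0$ as $\rho \to 0$. For $q_2^{(k)}$, interior estimates for harmonic functions give a uniform-in-$k$ bound of the form $\norm{\nabla q_2^{(k)}(\cdot,t)}_{L^\infty(B(r_0/8))} \lesssim r_0^{-3} \norm{q_2^{(k)}(\cdot,t)}_{L^{3/2}(B(r_0/4))}$, whose right-hand side is uniformly bounded in $L^{3/2}_t$ since $q^{(k)}$ and $q_1^{(k)}$ are. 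Integrating the resulting $O(\rho)$ spatial oscillation bound on $B(\rho)$ then gives $D(Q(\rho), q_2^{(k)}) \to 0$ as $\rho \to 0$, uniformly in $k$. Putting everything together, first fix $\rho$ small to make the $u$-side and harmonic contributions each $< \varepsilon_\ast/3$; then take $k$ large to absorb the strong-convergence error for $q_1^{(k)}$. The resulting inequality $C + D < \varepsilon_\ast$ on $Q(\rho)$ triggers $\varepsilon$-regularity and closes the contradiction. The crux of the argument is the pressure step, where the merely weak convergence of $q^{(k)}$ is converted into usable $D$-control via the Calder\'on-Zygmund/harmonic split.
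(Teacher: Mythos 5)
Your proposal is correct and follows essentially the same route as the paper's proof: argue by contradiction/contrapositive, transfer smallness of the $L^3$ velocity quantity via the strong local convergence and the subcriticality of $u \in L^\infty$, split the pressure into a Calder\'on--Zygmund piece plus a spatially harmonic remainder whose contribution to the rescaled pressure quantity decays like $r$ by interior estimates, and close with the CKN $\epsilon$-regularity criterion. The only (inessential) difference is that you control the Calder\'on--Zygmund piece by passing to its strong $L^{3/2}$ limit and using smoothness of the limiting local pressure, whereas the paper bounds it directly by the small quantity $r^{-2}\int_{Q(r)}|v^{(k)}|^3$.
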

\begin{proof}
	We prove the contrapositive. Suppose that $u \in L^\infty(Q(R))$ for some $0<R<1$. Let $\epsilon > 0$ (to be fixed later). Then there exists $0 < R_0 < R$ (depending also on $\epsilon$) satisfying, for all $0 < r \leq R_0$,
	\begin{equation}
	\frac{1}{r^2} \int_{Q(r)} |u|^3 \, dx \, dt \leq \epsilon.
	\end{equation}
	This is because $u \in L^\infty(Q(R))$ is a subcritical assumption.
	Rescaling, we may set $R_0 = 1$. By the strong convergence in~\eqref{eq:convergenceofsuitableweaksols}, for $k$ sufficiently large (depending on $0 < r \leq 1$),
	\begin{equation}
	\label{vkest}
	\frac{1}{r^2} \int_{Q(r)} |v^{(k)}|^3 \, dx \, dt \leq 2\epsilon.
	\end{equation}
	We decompose the pressure as $q^{(k)} = \tilde{q}^{(k)} + h^{(k)}$, where
	\begin{equation}
		\label{qkdef}
		 \tilde{q}^{(k)} = (-\Delta)^{-1} \div \div (\varphi v^{(k)} \otimes v^{(k)}),
	\end{equation}
	$\varphi \in C^\infty_0(B)$ ($0 \leq \varphi \leq 1$) satisfies $\varphi \equiv 1$ on $B(3/4)$, and $h^{(k)}(\cdot,t)$ is harmonic in $B(3/4)$.
	 By~\eqref{qkdef} and Calder{\'o}n-Zygmund estimates,
	\begin{equation}
		\label{qkest}
		\frac{1}{r^2} \int_{Q(r)} |\tilde{q}^{(k)}|^{\frac{3}{2}} \,dx\,dt \leq \frac{C}{r^2}  \int_{Q(r)} |v^{(k)}|^3 \, dx \,dt.
	\end{equation}
	By the triangle inequality and~\eqref{qkest} (with $r=1$),
	\begin{equation}
		\label{hkunitscaleest}
		 \int_{Q} |h^{(k)}|^{\frac{3}{2}} \,dx \,dt \leq C  \sup_{l \in \N} \left( \int_{Q} |v^{(l)}|^{3} + |q^{(l)}|^{\frac{3}{2}} \, dx \,dt \right) \leq CM.
	\end{equation}
	($M > 0$ depends on $\epsilon$ through $R_0$.)
	By H{\"o}lder's inequality and interior regularity for harmonic functions, whenever $0 < r \leq 1/2$,
	\begin{equation}
	\label{hkscaleinvarest}
		\frac{1}{r^2} \int_{Q(r)} |h^{(k)}|^{\frac{3}{2}} \, dx \,dt \leq C r \int_{-\frac{1}{4}}^{0} \norm{ h^{(k)}(\cdot,t) }_{L^{\infty}(B(1/2))}^{\frac{3}{2}} dt\leq CMr,
	\end{equation}
	 Finally, one may combine~\eqref{vkest},~\eqref{qkest}, and~\eqref{hkscaleinvarest} and fix $\epsilon$ and $r$ sufficiently small to obtain
	 \begin{equation}
	\limsup_{k \to \infty} \frac{1}{r^2} \int_{Q(r)} |v^{(k)}|^3 + |q^{(k)}|^{\frac{3}{2}} \leq \epsilon_{\rm CKN},
	 \end{equation}
	 where $\epsilon_{\rm CKN} > 0$ is the constant in the $\epsilon$-regularity criterion. This ensures
	 \begin{equation}
	\limsup_{k \to \infty} \sup_{Q(r/2)} |v^{(k)}| \leq \frac{C_{\rm CKN}}{r},
	 \end{equation}
	 as desired.
\end{proof}

Since the forward direction of Theorem~\ref{thm:localver} deals with local solutions, it is useful to locally mimic the situation of the ``first singular time" in the Cauchy problem. The following proposition follows from partial regularity, see \cite[Lemma 3.2]{kukavicarusinzianeJMFM2017} and \cite[Theorem 3]{sereginsverakhandbook}.

\begin{proposition}[Regular cylinder lemma]
\label{pro:goodsingulartime}
	If $v$ is a suitable weak solution in $Q$ with singular point at the space-time origin, then there exist $z^* \in B(1/2) \times ]-1/4,0]$ and $0<R<1/2$ satisfying
	\begin{equation}
	v \in L^{\infty}(Q(z^*,R) \setminus Q(z^*,r)) \text{ for all } 0 < r < R.
	\end{equation}
\end{proposition}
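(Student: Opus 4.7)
The plan is to use the Caffarelli--Kohn--Nirenberg partial regularity theorem to locate a singular point $z^*$ and a radius $R>0$ such that $z^*$ is the \emph{unique} singular point of $v$ in $\overline{Q(z^*,R)}$; once such a pair is in hand, the desired $L^\infty$ bound follows from $\epsilon$-regularity and a finite covering.

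To find $z^*$, note that the singular set $\Sigma \subset \overline Q$ of $v$ is closed (its complement is open by the $\epsilon$-regularity criterion recalled in Proposition~\ref{pro:appearanceofsingularity}) and contains the origin by hypothesis. CKN partial regularity gives $\mathcal{P}^1(\Sigma) = 0$, so in particular the time projection $\pi_t(\Sigma \cap \overline{B(1/2)}\times[-1/4,0])$ is a compact subset of $[-1/4,0]$ of Lebesgue measure zero containing $0$. I would take $t^* := \min \pi_t(\Sigma \cap \overline{B(1/2)}\times[-1/4,0])$, attained by compactness; by working in a slightly smaller initial window, I may assume $t^* \in\;]-1/4,0]$. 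By construction, no point of $\overline{B(1/2)}\times\;]-1/4,t^*[$ is singular for $v$. I would then choose $x^* \in B(1/2)$ in the spatial slice $\Sigma_{t^*} := \{x : (x,t^*)\in\Sigma\}$ so that $\Sigma_{t^*}\cap \overline{B(x^*,\rho)} = \{x^*\}$ for some $\rho>0$. Setting $z^*=(x^*,t^*)$ and taking $R < \min(\rho,\sqrt{t^*+1/4},\,\mathrm{dist}(x^*,\partial B(1/2)))$ then yields $\Sigma \cap \overline{Q(z^*,R)} = \{z^*\}$. For any $0<r<R$, the set $\overline{Q(z^*,R) \setminus Q(z^*,r)}$ is compact and does not contain $z^*$; by $\epsilon$-regularity $v$ is locally bounded at each of its points, and a finite covering gives $v \in L^\infty(Q(z^*,R)\setminus Q(z^*,r))$.

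The main obstacle is the spatial isolation of $x^*$ in the slice $\Sigma_{t^*}$: $\mathcal{P}^1(\Sigma)=0$ entails $\mathcal{H}^1(\Sigma_{t^*})=0$ in space, but this alone does not exclude a perfect (Cantor-type) structure. The additional ingredient is that, because $t^*$ is the first singular time, $v$ is uniformly regular on $\overline{B(1/2)}\times[-1/4,t^*-\delta]$ for each $\delta>0$, so any accumulation point of $\Sigma_{t^*}$ must itself lie in $\Sigma_{t^*}$; combining this with a quantitative use of $\epsilon$-regularity as $t \upto t^*$ yields the isolated point, as carried out in the cited references~\cite{kukavicarusinzianeJMFM2017,sereginsverakhandbook}.
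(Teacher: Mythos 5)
Your overall strategy---reduce to a first singular time, isolate the singular point spatially in that time slice, then cover the compact annulus $\overline{Q(z^*,R)\setminus Q(z^*,r)}$ by finitely many cylinders of regularity---is the natural one, and the paper itself offers no proof beyond deferring to \cite{kukavicarusinzianeJMFM2017} and \cite{sereginsverakhandbook}, so there is no ``paper proof'' to diverge from. The first-singular-time reduction and the final covering step are fine (modulo the routine care you already take with the edge case $t^*=-1/4$). The problem is the step you yourself single out as the main obstacle: producing $x^*$ with $\Sigma_{t^*}\cap\overline{B(x^*,\rho)}=\{x^*\}$. As you note, $\mathcal{P}^1(\Sigma)=0$ only yields $\mathcal H^1(\Sigma_{t^*})=0$, which is compatible with $\Sigma_{t^*}$ being a perfect Cantor-type set with no isolated points, so the topological/measure-theoretic input alone cannot close the argument.

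The difficulty is that your proposed fix does not actually close it either. The observation that ``any accumulation point of $\Sigma_{t^*}$ must itself lie in $\Sigma_{t^*}$'' is just closedness of the singular set and gives nothing new. And the ``quantitative use of $\epsilon$-regularity as $t\upto t^*$'' is not specified; the standard quantitative consequences one can extract---namely that every singular point $z_0$ satisfies $\frac{1}{\rho^2}\int_{Q(z_0,\rho)}(|v|^3+|q|^{3/2})\geq\epsilon_{\rm CKN}$ at every scale $\rho$, combined with disjointness of cylinders around separated singular points and absolute continuity of the integral near $t^*$---only give covering bounds of the form $N(\rho)\,\rho^2\to0$ (equivalently, vanishing Minkowski-type content), and Vitali-type refinements only reproduce $\mathcal H^1(\Sigma_{t^*})=0$. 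None of these exclude a perfect slice, because the lower bounds at singular points are scale-invariant and do not propagate to a fixed scale (there is no monotonicity formula), so they never yield a uniform separation or a finiteness statement for $\Sigma_{t^*}$. Whatever additional mechanism \cite{kukavicarusinzianeJMFM2017,sereginsverakhandbook} use to select a singular point with no other singular points in a full closed backward cylinder $\overline{Q(z^*,R)}$ (including the top time slice) is precisely the content of the lemma, and it is absent from your argument. As written, the proposal correctly diagnoses where the difficulty lies but then appeals to the references for exactly that point, so it does not constitute a proof.
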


It is possible to combine Proposition~\ref{pro:goodsingulartime} and Bogovskii's operator to truncate the solution, see~\cite{neustupapenel1999},~\cite[Remark 12.3]{taolocalizationcompactness}, and~\cite{albrittonbarkerlocalregII}. We will not require this here.


As discussed in the introduction, boundedness of other widely considered critical quantities is known to imply $\bI(Q') < \infty$. For example, this is true of the weak Lebesgue spaces:
\begin{lemma}[Weak Serrin implies Type~I]
	\label{lem:weakserrin}
	If $v$ is a suitable weak solution on $Q$ with
	\begin{equation}
	 v \in L^{q,\infty}_t L^{p,\infty}_x(Q),
	\end{equation}
	where $3 \leq p \leq \infty$ and $2 \leq q \leq \infty$ satisfy the Ladyzhenskaya-Prodi-Serrin condition
	\begin{equation}
	\label{eq:Ladyzhenprodi}
	\frac{3}{p} + \frac{2}{q} = 1,
	\end{equation}
	then, for all $Q' = Q(R)$ with $0 < R < 1$,
	\begin{equation}
	\label{eq:weakserrinimpliedbound}
	\bI(Q') < \infty.
	\end{equation}
\end{lemma}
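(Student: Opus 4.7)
The plan is to bound each of $A$, $C$, $D$, $E$ uniformly on every parabolic sub-ball $Q(z,r) \subset Q(R)$. By monotonicity of $\bI$ in its domain, I may first shrink $R$ so that $Q(z,2r) \subset Q$ whenever $Q(z,r) \subset Q(R)$, giving enough room for standard cutoff and doubling arguments; the general $R < 1$ then follows by a limiting argument. I begin with $C$: applying H{\"o}lder's inequality in Lorentz spaces, first spatially (pairing $|v|^3 \in L^{p/3,\infty}_x$ with $\mathbf{1}_{B(x,r)} \in L^{(p/3)',1}_x$) and then temporally in the analogous way, one gets
\[ \int_{Q(z,r)} |v|^3 \, dz' \lesssim \norm{v}_{L^{q,\infty}_t L^{p,\infty}_x(Q)}^3 \, r^{3(1-3/p)} r^{2(1-3/q)} = \norm{v}_{L^{q,\infty}_t L^{p,\infty}_x(Q)}^3 \, r^2, \]
where the LPS identity~\eqref{eq:Ladyzhenprodi} forces the exponent to be exactly $2$; hence $C(Q(z,r)) \lesssim 1$ uniformly. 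The endpoints $(p,q) \in \{(3,\infty),(\infty,2)\}$ fall outside this direct Lorentz-H{\"o}lder argument and are handled by invoking the local regularity theorems known for those classes, which yield $v \in L^\infty_{\rm loc}(Q)$.

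For $D$, I would use the standard iterative pressure decomposition. For each scale $\rho$ with $Q(z,\rho) \subset Q$, write $q = q_1^\rho + q_2^\rho$, where $q_1^\rho = (-\Delta)^{-1} \p_i \p_j (\varphi_\rho v_i v_j)$ and $\varphi_\rho \in C_c^\infty(B(x,\rho))$ equals $1$ on $B(x,\rho/2)$. Calder{\'o}n-Zygmund estimates control $\norm{q_1^\rho(\cdot,s)}_{L^{3/2}(\R^3)}$ by $\norm{v(\cdot,s)}_{L^3(B(x,\rho))}^2$, while the harmonicity of $q_2^\rho$ on $B(x,\rho/2)$ yields the interior oscillation bound $\|q_2^\rho - [q_2^\rho]_{x,r}\|_{L^\infty(B(x,r))} \lesssim (r/\rho)\rho^{-2} \norm{q_2^\rho}_{L^{3/2}(B(x,\rho))}$ for $r \leq \rho/4$. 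Combined with $\norm{q}_{L^{3/2}(Q)} < \infty$ and the bound on $C$ from Step~1, this gives an iterative inequality
\[ D(Q(z,r)) \leq C_1 (r/\rho)^\alpha D(Q(z,\rho)) + C_2 C(Q(z,\rho)) + C_3, \]
for some $\alpha > 0$. Choosing $r/\rho$ small enough that $C_1(r/\rho)^\alpha < 1/2$ and iterating from a fixed large scale down to the desired $r$ gives $D(Q(z,r)) \lesssim 1$ uniformly. Finally, to control $A$ and $E$, I apply the local energy inequality~\eqref{localenergyienq} with a cutoff $\zeta$ supported in $Q(z,2r)$ and equal to $1$ on $Q(z,r)$, using divergence-freeness to replace $q$ by $q - [q]_{x,2r}(t)$ in the pressure term. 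H{\"o}lder estimation then yields
\[ A(Q(z,r)) + E(Q(z,r)) \lesssim C(Q(z,2r))^{2/3} + C(Q(z,2r)) + C(Q(z,2r))^{1/3} D(Q(z,2r))^{2/3}, \]
which is uniformly bounded by the previous steps.

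The main obstacle is Step~2: although each ingredient (Calder{\'o}n-Zygmund combined with interior regularity for harmonic functions) is classical, one must choose scales carefully and iterate in order to obtain a bound that remains finite down to arbitrarily small $r$, since the global $L^{3/2}$ bound on $q$ alone is not scale-invariant. The endpoints of~\eqref{eq:Ladyzhenprodi} in Step~1 are a secondary technical issue, resolved by appealing to the local regularity theory for the relevant Lorentz classes.
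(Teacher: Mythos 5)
Your architecture --- bound $C$ first by pure functional-analytic embedding, then bootstrap to $D$, $A$, $E$ --- has a genuine gap in Step~1 that is not confined to the two endpoints you flag. After the spatial H{\"o}lder step, the function $t' \mapsto \norm{v(\cdot,t')}_{L^{p,\infty}}^3$ lies only in $L^{q/3,\infty}_t$, and the temporal Lorentz--H{\"o}lder pairing with $\mathbf{1}_{]t-r^2,t[} \in L^{(q/3)',1}_t$ requires $q/3 > 1$, i.e.\ $q > 3$, i.e.\ $p < 9$. For the whole range $9 \leq p \leq \infty$ (equivalently $2 \leq q \leq 3$) the argument breaks, and not merely technically: for a \emph{general} function in $L^{q,\infty}_t L^{p,\infty}_x$ with $q \le 3$ the local $L^3_{t,x}$ norm can be infinite (consider $v \sim |t|^{-1/2}$ for $(p,q)=(\infty,2)$). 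A uniform bound on $C$ in that range can only come from using the equation --- the local energy inequality --- to first control $A$ and $E$ (absorbing the nonlinear term estimated in the critical Morrey norm) and then recovering $C$ by interpolation, e.g.\ $C \lesssim A^{3/4}E^{3/4} + A^{3/2}$. This is exactly the content of Lemma~\ref{lem:Morreytypeestimates}, and the paper's own proof is a two-line reduction to it: the hypothesis embeds into the critical Morrey class $M^{s,l}$ with $s = p-\delta$, $l = q-\delta$ (so that $3/s + 2/l < 2$), and that lemma --- whose proof, cited from Seregin's work, runs through the local energy inequality --- delivers $\bI(Q') < \infty$ in one stroke. The remark after Lemma~\ref{lem:Morreytypeestimates} stresses precisely that suitability is essential there; your Step~1 tries to do without it.

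Your fallback for the endpoints is also not available: there is no local regularity theorem asserting $v \in L^\infty_{\loc}$ for suitable weak solutions in $L^\infty_t L^{3,\infty}_x$ or in $L^{2,\infty}_t L^\infty_x$. Only \emph{smallness} in these classes implies regularity (and not even that when $q=2$), as the paper notes in citing Kim--Kozono; if boundedness sufficed, Type~I singularities in the sense of~\eqref{eq:TypeIinspacetime}--\eqref{eq:TypeILinfty} would be excluded and Theorem~\ref{thm:localver} would lose its point. On the positive side, in the range $3 < p < 9$ your Step~1 is correct, and your Steps~2 and~3 (the CKN-type pressure iteration at fixed scale ratio, initialized by $\norm{q}_{L^{3/2}(Q)} < \infty$, plus the local energy inequality with mean-subtracted pressure) are the standard and essentially correct route from a uniform bound on $C$ to bounds on $D$, $A$, $E$; the only cosmetic issue there is that the constant $C_2$ in your iteration inequality must be allowed to depend on the fixed ratio $r/\rho$. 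So the proposal proves a strict sub-range of the lemma; to cover $p \geq 9$ you must reorganize so that $A$ and $E$ are estimated before $C$, which is what the Morrey-space lemma the paper invokes already does.
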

Notice that having one of~\eqref{eq:TypeIinspacetime}-\eqref{eq:TypeILinfty} bounded is enough to apply Lemma~\ref{lem:weakserrin} (for suitable weak solutions).
It is already known that absolute smallness in the above $L^{q,\infty}_t L^{p,\infty}_x$ spaces (with the exception of the case $q = 2$) implies regularity, see~\cite{kimkozonoweakspaces}.


To prove Lemma~\ref{lem:weakserrin}, we use the critical Morrey-type quantities
\begin{equation}
	M^{s,l}(Q') = \frac{1}{R^{\kappa}} \int_{t-r^2}^t \left( \int_{B(x,r)} |v|^s \, dx' \right)^{\frac{l}{s}} \, dt',
\end{equation}
where $\kappa = l (2/l+3/s-1)$, defined for $1 \leq s,l \leq \infty$ (with the obvious modification when $l=\infty$).
The next lemma asserts that finiteness of rescaled energies $A,C,E$ (see~\cite{SereginCriticalMorrey2006}) or critical Morrey-type quantities $M^{s,l}$ (see~\cite[Theorem 6]{sereginsverakhandbook} and~\cite{SereginZajaczkowskiPOMI2006}) implies Type~I bounds for suitable weak solutions.
\begin{lemma}[Morrey-type estimates]
	\label{lem:Morreytypeestimates}
	Suppose $(v,q)$ is a suitable weak solution in $Q$ with
	\begin{equation}
	\min_{s,l} \left\lbrace \sup_{Q' \subset Q}  A(Q'), \sup_{Q' \subset Q}  C(Q'), \sup_{Q' \subset Q} E(Q'), \sup_{Q' \subset Q} M^{s,l}(Q') \right\rbrace < \infty,
	\end{equation}
	where $s>3/2$, $l>1$ are finite and required to satisfy\footnote{
	The statement in~\cite{sereginsverakhandbook} also contains the requirement $3/s+2/l-3/2 > \max \{ 2/l,1/2-1/l \}$. However, this requirement can be avoided by decreasing $s$ and/or $l$ using embeddings of Morrey spaces.}
	\begin{equation}
	\frac{3}{s} + \frac{2}{l} < 2.
\end{equation}
	Then, for all $Q' = Q(R)$ with $0 < R < 1$,
	\begin{equation}
	\label{eq:morreyIbound}
	\bI(Q') < \infty.
	\end{equation}
\end{lemma}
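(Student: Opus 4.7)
The plan is to show that boundedness of any one of $A$, $C$, $E$, or $M^{s,l}$ on all subcylinders of $Q$ bootstraps, via the structural properties of a suitable weak solution, to simultaneous boundedness of $A$, $C$, $D$, $E$ on subcylinders of $Q(R)$ for each $R < 1$. The three ingredients are standard: the local energy inequality, Calder\'on--Zygmund estimates for the pressure, and scale-invariant Gagliardo--Nirenberg interpolation.

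First I would reduce every case to a uniform $C$-bound on subcylinders. If $A$ or $E$ is assumed finite, the scale-invariant inequality
\begin{equation*}
C(Q') \les A(Q')^{3/4} E(Q')^{3/4} + A(Q')^{3/2},
\end{equation*}
obtained from Ladyzhenskaya's inequality on $B(x,r)$ followed by H\"older in time, combined with the local energy inequality (which exchanges $A$ and $E$ on nested cylinders), yields a uniform $C$-bound on slightly smaller subcylinders. If instead only $M^{s,l}$ is finite with $3/s + 2/l < 2$, H\"older's inequality in $x$ (when $s \geq 3$) or in both $x$ and $t$ after a Morrey-type embedding into a smaller Lebesgue exponent (when $3/2 < s < 3$) converts this into a $C$-bound; the gap $2 - 3/s - 2/l > 0$ is precisely what supplies the favorable radius exponent and absorbs the embedding loss.

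Once $C$ is uniformly bounded on $\{Q' \subset Q(R_1)\}$ for some $R < R_1 < 1$, the pressure decomposition already used in the proof of Proposition~\ref{pro:appearanceofsingularity} gives a $D$-bound on a slightly smaller scale: writing $q = \tilde q + h$ with $\tilde q = (-\Delta)^{-1} \div \div(\varphi v \otimes v)$ and $h$ harmonic on a sub-ball, Calder\'on--Zygmund bounds the contribution of $\tilde q$ to $D$ by $C$, while interior H\"older regularity for the harmonic function $h$ gives an extra factor of $r$; the mean subtraction $q - [q]_{x,r}$ in the definition of $D$ absorbs the additive normalization of $h$. With $C$ and $D$ both uniformly bounded, the local energy inequality applied with a standard cutoff between concentric cylinders then produces bounds on $A$ and $E$ via the usual estimate on the flux terms $(|v|^2 + 2q) v \cdot \nabla \zeta$, completing $\bI(Q(R)) < \infty$.

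The main obstacle to manage is the cumulative radius loss at each bootstrap step, since the conclusion must hold for arbitrary $0 < R < 1$ and the supremum in $\bI$ ranges over all subcylinders, not merely centered ones. I would organize the argument as a finite chain of implications on nested cylinders $Q(R) \subset Q(R_3) \subset Q(R_2) \subset Q(R_1) \subset Q$, each inclusion absorbing the loss of one reduction, and then use translation invariance of all the critical quantities to pass from bounds on sub-cylinders based at arbitrary interior points back to the full supremum in the definition of $\bI(Q(R))$.
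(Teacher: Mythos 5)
The paper does not actually prove this lemma: it is quoted from the literature (Seregin's critical Morrey space results, \cite[Theorem 6]{sereginsverakhandbook}, and Seregin--Zaj\c{a}czkowski), so the only honest comparison is with those arguments. Measured against them, your outline has the right skeleton for the easiest implication (a uniform $C$-bound, plus a pressure estimate, plus the local energy inequality gives $A$, $D$, $E$), but the reductions \emph{to} the $C$-bound are where the real content lies, and there your proposal has genuine gaps. First, the $M^{s,l}$ case with $3/2 < s < 3$: there is no ``Morrey-type embedding'' that converts $\sup_{Q'} M^{s,l}(Q') < \infty$ into $\sup_{Q'} C(Q') < \infty$ for a general function, because H\"older goes the wrong way in $x$ when $s<3$ (consider $M^{2,l}$ with $l$ large, which satisfies $3/s+2/l<2$ but controls only the spatial $L^2$ norm). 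The paper itself warns that the lemma ``relies on the local energy inequality'' and that suitability is crucial; a pure function-space reduction to $C$ would make that remark false. The actual argument interpolates between $L^\infty_t L^2_x$ and $L^2_t \dot H^1_x$ \emph{using the equation}, via a coupled iteration. Second, the $A$-only and $E$-only cases as you describe them are circular: the local energy inequality does not ``exchange $A$ and $E$''; it bounds $A+E$ on an inner cylinder by $C$ and $D$ on an outer one, while your interpolation inequality bounds $C$ by $A$ and $E$. Closing this loop requires the version of the interpolation inequality with the ratio of radii made explicit, $C(r) \les (r/\rho)^3 A(\rho)^{3/2} + (\rho/r)^3 A(\rho)^{3/4}E(\rho)^{3/4}$, together with the pressure decay $D(r) \les (r/\rho)\,D(\rho) + (\rho/r)^2 C(\rho)$, and an induction over dyadic scales $\theta^k$ with $\theta$ chosen small --- not a finite chain of nested cylinders, since the supremum in $\bI$ ranges over arbitrarily small $Q'$ and each quantity feeds back into the others at every scale.

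There is also a quantitative error in the pressure step even in the case where $\sup_{Q'} C(Q')<\infty$ is granted. With a single decomposition $q = \tilde q + h$, $\tilde q = (-\Delta)^{-1}\div\div(\varphi v\otimes v)$, the Calder\'on--Zygmund estimate controls $\|\tilde q(\cdot,t)\|_{L^{3/2}(\R^3)}$ by $\|v(\cdot,t)\|^2_{L^3(B)}$ over the \emph{unit} ball, so for a small subcylinder $Q(z_0,r)$ one only gets
\begin{equation*}
\frac{1}{r^2}\int_{Q(z_0,r)}|\tilde q|^{3/2}\,dz' \;\les\; \frac{1}{r^2}\int_{t_0-r^2}^{t_0}\norm{v(\cdot,t)}_{L^3(B)}^{3}\,dt,
\end{equation*}
which is not bounded by $\sup_{Q'}C(Q')$ uniformly in $r$ (covering $B$ by balls of radius $r$ shows it can degenerate like $r^{-3}$). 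You must split $\varphi v\otimes v$ into a near part supported in $B(x_0,2r)$ (where Calder\'on--Zygmund gives $C(Q(z_0,2r))$) and a far part, and estimate the far part after mean subtraction by a dyadic decomposition of the kernel, summing $\int_{t_0-r^2}^{t_0}\int_{B(x_0,2^j)}|v|^3 \le M\,2^{2j}$ over scales $2^j \ge r$; only then does the gain from $q-[q]_{x,r}$ produce a bound by $\sup_{Q'}C(Q')$ plus the unit-scale quantities $C(1)$ and $D(1)$, consistent with the dependence the paper records after the lemma. None of these gaps is fatal --- they are exactly the points the cited references address --- but as written the proposal does not constitute a proof.
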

For the above result to hold, it is crucial that $(v,q)$ is already assumed to be suitable, since the proof relies on the local energy inequality. Indeed, the estimate which gives~\eqref{eq:morreyIbound} depends on the background quantities $C(1)$ and $D(1)$.

\begin{proof}[Proof of Lemma~\ref{lem:weakserrin}]
	Let $\delta > 0$ sufficiently small, so that $s=p-\delta$ and $l=q-\delta$ satisfy the requirements of Lemma~\ref{lem:Morreytypeestimates}. Then the embedding properties of Lorentz spaces imply
	\begin{equation}
	 M^{s,l}(Q')^{\frac{1}{l}} \leq C\norm{v}_{L^l_t L^s_x(Q')} \leq C\norm{v}_{L^{q,\infty}_t L^{p,\infty}_x(Q)},
	\end{equation}
	 for all parabolic balls $Q' \subset Q$.
\end{proof}

\section{Proof of Theorem~\ref{thm:localver}}

We now prove Theorem~\ref{thm:localver}. As the forward direction is essentially known, we focus on the reverse direction. The forward direction is also valid in the local setting with curved boundary without Type~I assumptions, see~\cite{albrittonbarkerlocalregII}.

\begin{proof}[Proof]
\textbf{Forward direction}. Suppose that $v$ is a suitable weak solution in $Q$ with singularity at the space-time origin and $\bI(Q) < \infty$. By Proposition~\ref{pro:goodsingulartime}, we may assume that $v \in L^{\infty}(Q \setminus Q(r))$ for all $0 < r \leq 1$. This may require considering an earlier singularity than the original. It is proven in~\cite[Theorem 2.8]{sereginsverakaxisymmetric} and~\cite[Section 5]{sereginsverakhandbook} that, under an appropriate rescaling procedure, such a solution (even without the Type~I assumption) gives rise to a non-trivial mild bounded ancient solution $u$. It is clear from the rescaling procedure in~\cite{sereginsverakaxisymmetric,sereginsverakhandbook} that $u$ will satisfy $\bI < \infty$.

\textbf{Reverse direction}. Suppose that $v$ is a non-trivial mild bounded ancient solution satisfying $\bI < \infty$. By translating in space-time as necessary, we have
\begin{equation}
	\label{eqN}
	\norm{v}_{L^\infty(Q)} = N > 0.
\end{equation}
Consider the sequence $(v^{(k)})_{k \in \N}$ of suitable weak solutions
	\begin{equation}
	\label{rescaling}
	v^{(k)}(x,t) = k v(k x, k^2 t), \quad (x,t) \in Q(2).
	\end{equation}
	By the uniform estimate
	\begin{equation}
	\label{eq:uniformest}
	\sup_{k \in \N} \bI(v^{(k)},Q(2)) < \infty
	\end{equation}
	and Lemma~\ref{lem:compactnessforsuitable}, there exists a subsequence and a suitable weak solution $(u,p)$ with
	\begin{equation}
	v^{(k)} \to u \text{ in } L^3(Q) \text{ and } q^{(k)} \wto p \text{ in } L^{\frac{3}{2}}(Q).
	\end{equation}
	Moreover,~\eqref{eqN} and~\eqref{rescaling} give
	\begin{equation}
	\norm{v^{(k)}}_{L^\infty(Q(1/k))} = k N \to \infty.
	\end{equation}
	Hence, Proposition~\ref{pro:appearanceofsingularity} implies that $u$ is singular at the space-time origin. Finally,
	\begin{equation}
	\bI(u) < \infty
	\end{equation}
	follows from~\eqref{eq:uniformest}. That is, the singularity is Type~I.
\end{proof}



We now address other formulations of Type~I.

\begin{theorem}
\label{thm:localvertypei}
Let $3 \leq p < \infty$.
The following are equivalent:
\begin{itemize}
	\item There exists a suitable weak solution in $Q$ with singularity at the space-time origin and
	\begin{equation}
		\label{eq:typei}
		\esssup_{-1<t<0} (-t)^{\frac{1}{2}-\frac{3}{2p}} \norm{v}_{L^{p,\infty}(B)} < \infty.
	\end{equation}
	\item There exists a mild bounded ancient solution satisfying
	\begin{equation}
	\label{eq:typeiancient}
		\esssup_{t<0} (-t)^{\frac{1}{2}-\frac{3}{2p}} \norm{v}_{L^{p,\infty}} < \infty.
	\end{equation}
\end{itemize}
\end{theorem}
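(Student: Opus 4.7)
I follow the template of Theorem~\ref{thm:localver}: zoom in for the forward direction, zoom out for the reverse direction, while tracking the weighted $L^{p,\infty}$ Type~I bound through both limiting procedures. The key bridge between \eqref{eq:typei}, \eqref{eq:typeiancient} and the unweighted machinery of Section~\ref{sec:preliminaries} is that the time weight $(-t)^{-(1/2-3/(2p))}$ belongs to $L^{q,\infty}_t$ on any bounded time interval precisely when $q = 2p/(p-3)$, and this pair $(p,q)$ then satisfies the Ladyzhenskaya-Prodi-Serrin condition $3/p + 2/q = 1$. Consequently, the Type~I hypothesis embeds $v$ into $L^{q,\infty}_t L^{p,\infty}_x$ uniformly, and Lemma~\ref{lem:weakserrin} upgrades this to $\bI(Q') < \infty$ on any parabolic subcylinder. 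This conversion is the engine used throughout.

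\noindent\textbf{Reverse direction (main novelty).} Given a non-trivial mild bounded ancient solution $v$ satisfying \eqref{eq:typeiancient}, translate in space-time so that $\norm{v}_{L^\infty(Q)} = N > 0$, and form the blow-down sequence $v^{(k)}(x,t) = k v(kx, k^2 t)$ on $Q(2)$. A direct scaling check using the identity $1 - 3/p = 2(1/2 - 3/(2p))$ shows that \eqref{eq:typeiancient} is scale invariant and therefore passes to $v^{(k)}$ uniformly in $k$, using that the rescaled time range $(-4k^2, 0)$ stays inside the ancient solution's domain. Combined with the Lorentz-in-time calculation above and Lemma~\ref{lem:weakserrin}, this yields $\sup_{k \in \N} \bI(v^{(k)}, Q') < \infty$ for every $Q' = Q(R)$ with $0 < R < 2$. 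Lemma~\ref{lem:compactnessforsuitable} then extracts a suitable weak solution limit $u$, and $\norm{v^{(k)}}_{L^\infty(Q(1/k))} = kN \to \infty$ together with Proposition~\ref{pro:appearanceofsingularity} forces $u$ to be singular at the space-time origin. The bound \eqref{eq:typei} for $u$ transfers from the uniform Type~I bound on $v^{(k)}$ by Fubini plus a diagonal extraction to obtain pointwise a.e.\ convergence in space for a.e.\ fixed time, followed by the lower semicontinuity of $\norm{\cdot}_{L^{p,\infty}_x}$ under pointwise a.e.\ convergence.

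\noindent\textbf{Forward direction.} Given a singular suitable weak solution $v$ on $Q$ with \eqref{eq:typei}, I first apply Proposition~\ref{pro:goodsingulartime} to reduce to the case $v \in L^\infty(Q \setminus Q(r))$ for all $0 < r \leq 1$ (as in the forward direction of Theorem~\ref{thm:localver}), and then invoke the Seregin-{\v S}ver{\'a}k zoom-in procedure from~\cite{sereginsverakaxisymmetric, sereginsverakhandbook} to produce a non-trivial mild bounded ancient solution $u$ as an $L^3_\loc$ limit of rescalings $v^{(k)}(x,t) = \lambda_k v(\lambda_k x, \lambda_k^2 t)$ along some $\lambda_k \downarrow 0$. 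The weighted Lorentz norm is again scale invariant, and \eqref{eq:typeiancient} for $u$ follows by the same Fubini-plus-lower-semicontinuity argument as in the reverse direction.

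\noindent\textbf{Main obstacle.} The delicate step is the reverse direction, specifically the Lorentz embedding $(-t)^{-\alpha} \in L^{1/\alpha,\infty}_t$ (with $\alpha = 1/2 - 3/(2p)$) that drives the Serrin-type control and the uniform $\bI$ bound. The embedding is clean for $p \in [3, \infty)$, where $\alpha \in [0, 1/2)$ and $q \in (2, \infty]$, but degenerates as $p \to \infty$, consistent with the exclusion of $p = \infty$ from the statement and with the warning in Remark~\ref{rmkonpinfty}.
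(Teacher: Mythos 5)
Your forward direction is essentially the paper's intended argument (which it omits): since $v$ is assumed suitable on $Q$, the quantities $C(Q)$ and $D(Q)$ are finite by definition, so Lemma~\ref{lem:weakserrin} applies and gives $\bI(Q(1/2))<\infty$, after which the zoom-in of Theorem~\ref{thm:localver} and scale invariance of the weighted norm finish the job. The reverse direction, however, has a genuine gap, and it sits exactly at the step you flag as routine: ``Combined with the Lorentz-in-time calculation above and Lemma~\ref{lem:weakserrin}, this yields $\sup_{k}\bI(v^{(k)},Q')<\infty$.'' Lemma~\ref{lem:weakserrin} rests on Lemma~\ref{lem:Morreytypeestimates}, whose conclusion is \emph{not} a bound in terms of the Morrey quantity alone: as the paper stresses immediately after that lemma, the estimate depends on the background quantities $C(1)$ and $D(1)$ of the solution, because the proof runs through the local energy inequality. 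For the blow-down sequence, $C(v^{(k)},Q(2))=C(v,Q(2k))$ and $D(v^{(k)},Q(2))=D(v,Q(2k))$, and uniform control of these large-scale energies of $v$ is precisely what you are trying to prove; the argument is circular. Concretely, the weighted $L^{p,\infty}$ bound by itself does not even give $\sup_k\norm{v^{(k)}}_{L^3(Q)}<\infty$ when $p\geq 9$ (the time integral $\int_{-1}^0(-t)^{-3/2+9/(2p)}\,dt$ diverges), let alone the $A$, $E$, and pressure bounds needed to invoke Lemma~\ref{lem:compactnessforsuitable} and Proposition~\ref{pro:appearanceofsingularity}. This is also why $p=\infty$ is excluded (Remark~\ref{rmkonpinfty}): the issue is not the Lorentz-in-time embedding, which is trivial, but the absence of an a priori local energy bound up to $t=0$.

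The paper closes this gap with a Calder{\'o}n-type splitting, which is the actual content of its proof: decompose $v(\cdot,-1)=\tilde{u_0}+\bar{u_0}$ with $\tilde{u_0}\in L^2$ and $\bar{u_0}\in L^{2p}$ small, let $V$ be the subcritical mild solution with data $\bar{u_0}$, and show by a Gronwall argument that the correction $U=v-V$ lies in the energy class on $\R^3\times{]-1,0[}$, with the pressure split accordingly. This produces the uniform bounds on $A$, $C$, $D$, $E$ (hence $\bI<\infty$ for the ancient solution by scale invariance and translation), after which the zoom-out of Theorem~\ref{thm:localver} applies verbatim. If you want to salvage your outline, you must insert this (or some other) a priori energy construction before invoking the compactness and persistence-of-singularities machinery; note that this step uses $3\leq p<\infty$ in an essential way, since $L^\infty$ data admits no such $L^2$-plus-small splitting.
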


\begin{remark}
\label{rmkonpinfty}
It is noteworthy that $p=\infty$ is omitted despite being a popular formulation of Type~I. This is because $\sup_{t < 0} \sqrt{-t} \norm{v(\cdot,t)}_{L^\infty} < \infty$ alone does not appear to guarantee $\bI < \infty$, or even that the local energy is finite up to (and including) the blow-up time. This is related to the fact that no global-in-time weak solution theory is known for $L^\infty$ initial data. However, the forward direction remains valid because Lemma~\ref{lem:weakserrin} implies $\bI(Q(1/2)) < \infty$ (with an estimate depending on the quantities $C(1)$ and $D(1)$ for suitable weak solutions).

When $p>3$, it is possible to prove Theorem~\ref{thm:localvertypei} with mild solutions replacing suitable weak solutions. One could also consider $\sup_{x,t} \left(|x| + \sqrt{-t} \right) |v|$, Lorentz spaces $L^{p,q}$ ($1 < q \leq \infty$), etc.
\end{remark}

\begin{proof}[Proof of Theorem~\ref{thm:localvertypei} (Reverse direction)]
Let $3 \leq p < \infty$. We allow the constants below to depend implicitly on $p$. It suffices to prove that a mild bounded ancient solution satisfying~\eqref{eq:typeiancient} also satisfies $\bI < \infty$. By translating in space-time and rescaling, we only need to demonstrate
\begin{equation}
	\label{eq:sufficestoshow}
	A(1/2) + C(1/2) + D(1/2) + E(1/2) \leq C(M),
\end{equation}
where
\begin{equation}
	\sup_{-1<t<0} (-t)^{\frac{1}{2} + \frac{3}{2p}} \norm{v(\cdot,t)}_{L^{p,\infty}} \leq M.
\end{equation}
We utilize a Calder{\'o}n-type splitting, see~\cite{calderon90,Jiasver2013,albrittonblowupcriteria}. Decompose $a := v(\cdot,-1) = \tilde{u_0} + \bar{u_0}$, where
\begin{equation}
	\tilde{u_0} = \bP \left( \mathbf{1}_{\{ |a| > \lambda M \}} a \right),
\end{equation}
and $\lambda > 0$ will be determined later. This gives
\begin{equation}
	\norm{\tilde{u_0}}_{L^2} \leq C(\lambda,M) \; \text{ and } \; \norm{\bar{u_0}}_{L^{2p}} \leq C_0(\lambda,M),
\end{equation}
where $C_0(\lambda,M) \to 0$ as $\lambda \to 0^+$.
We decompose the solution as
\begin{equation}
	\label{eq:vdecomp}
	v = V + U,
\end{equation}
where $V \in C([-1,0];L^{2p})$ is the mild solution of the Navier-Stokes equations on $\R^3 \times ]-1,0[$ with initial data $\bar{u_0}$. When $0 < \lambda \ll 1$ (depending on $M$),  $V$ is guaranteed to exist on $\R^3 \times ]-1,0[$, and
\begin{equation}
	\label{eq:Vest}
	\norm{V}_{L^\infty_t L^{2p}_x(\R^3 \times ]-1,0[)} + \norm{(t+1)^{\frac{1}{2}} \nabla V}_{L^\infty_t L^{2p}_x(\R^3 \times ]-1,0[)} \leq 1.
\end{equation}
 By the Calder{\'o}n-Zygmund estimates and pressure representation $Q = (-\Delta)^{-1} \div \div V \otimes V$,
\begin{equation}
	\label{eq:Qest}
	\norm{Q}_{L^\infty_t L^p_x(\R^3 \times ]-1,0[)} \leq C.
\end{equation}
The correction $U$ solves a perturbed Navier-Stokes equations with initial data $\tilde{u_0}$ and zero forcing term. It is possible to show that $U$ (which belongs to subcritical spaces) belongs to the energy space on $\R^3 \times ]-1,0[$ and satisfies the energy inequality. (There is standard perturbation theory involved, using that $v$ and $V$ are mild solutions, see~\cite{albrittonblowupcriteria} for details.) A Gronwall-type argument implies
\begin{equation}
	\label{eq:Uest}
	\norm{U}_{L^\infty_t L^2_x(\R^3 \times ]-1,0[)} + \norm{\nabla U}_{L^2(\R^3 \times ]-1,0[)} + \norm{U}_{L^{\frac{10}{3}}(\R^3 \times ]-1,0[)} \leq C(\lambda,M).
\end{equation}
Using $P = (-\Delta)^{-1} \div \div (U \otimes U + V \otimes U + U \otimes V)$, Calder{\'o}n-Zygmund estimates, and H{\"o}lder's inequality, we obtain
\begin{equation}
	\label{eq:Pest}
	\norm{P}_{L^{\frac{3}{2}}(Q)} \leq C(\lambda,M).
\end{equation}
Combining~\eqref{eq:vdecomp} with~\eqref{eq:Vest}-\eqref{eq:Pest} completes the proof of the reverse direction. We omit the proof of the forward direction.
\end{proof}

\section{Proof of Theorem~\ref{thm:liouville}}
We will now prove the Liouville theorem. In fact, we will prove the following, more quantitative generalization to the Lorentz space $L^{3,\infty}$. Let $\mathbb{B}$ denote the subspace of $\dot B^{-1}_{\infty,\infty}$ whose functions $f$ satisfy
\begin{equation}
	f(\lambda \cdot) \to 0 \text{ in the sense of distributions as } \lambda \to \infty.
\end{equation}

\begin{theorem}[Liouville theorem]
	\label{thm:moregeneralliouville}
	For all $M > 0$, there exists a constant $\epsilon = \epsilon(M) > 0$ satisfying the following property. Suppose that $v$ is a mild ancient solution\footnote{In this section, we consider mild solutions belonging to the class $L^\infty_{t,\loc} L^\infty_x(\R^3 \times ]-\infty,0[)$.} such that
	\begin{equation}
	\label{vboundedseqtimes}
	\norm{v(\cdot,t_k)}_{L^{3,\infty}} \leq M
	\end{equation}
	for a sequence $t_k \dto -\infty$. If
	\begin{equation}
	{\rm dist}_{\dot B^{-1}_{\infty,\infty}}(v(\cdot,0),\mathbb{B}) \leq \epsilon,
	\end{equation}
	then
		\begin{equation}
		\limsup_{k \to \infty} \sqrt{|t_k|/2} \norm{v}_{L^\infty(Q(\sqrt{|t_k|/2}))} < \infty.
	\end{equation}
	Hence,
	\begin{equation}
		v \equiv 0.
	\end{equation}
\end{theorem}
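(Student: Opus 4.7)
The plan is to argue by contradiction, extracting a blow-up limit via the weak $L^{3,\infty}$ solution theory and then contradicting the singularity produced by persistence of singularities using the Besov smallness at time $0$.

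Suppose the conclusion fails; after passing to a subsequence, $\lambda_k := \sqrt{|t_k|/2}$ satisfies $\lambda_k \norm{v}_{L^\infty(Q(\lambda_k))} \to \infty$. Pick $z_k^\ast = (x_k^\ast, t_k^\ast) \in \overline{Q(\lambda_k)}$ approximately realizing this supremum and rescale--translate about $z_k^\ast$ by
\begin{equation}
v^{(k)}(y,s) = \lambda_k \, v\big(x_k^\ast + \lambda_k y, \; t_k^\ast + \lambda_k^2 s\big).
\end{equation}
Setting $\tau_k := (t_k - t_k^\ast)/\lambda_k^2 \in [-2,-1]$ and $\sigma_k := -t_k^\ast/\lambda_k^2 \in [0,1]$, scale-invariance of $L^{3,\infty}$ yields $\norm{v^{(k)}(\cdot,\tau_k)}_{L^{3,\infty}} \leq M$, while $|v^{(k)}(0,0)| \to \infty$ (after a minor shift to place this point in the interior of the relevant parabolic cylinder, if needed). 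Pass to a further subsequence with $\tau_k \to \tau_\infty$ and $\sigma_k \to \sigma_\infty$. Applying the weak $L^{3,\infty}$ solution theory of \cite{sereginsverakweaksols,barkersereginsverakstability} together with Lemma~\ref{lem:compactnessforsuitable}, I would extract a suitable weak limit $v^\infty$ on $\R^3 \times (\tau_\infty,\sigma_\infty)$ such that $v^{(k)} \to v^\infty$ in $L^3_{\loc}$. By Proposition~\ref{pro:appearanceofsingularity}, $v^\infty$ has a singular point at the space-time origin. On the other hand, since $v(\cdot,0)$ is $\epsilon$-close to $\mathbb{B}$ in $\dot B^{-1}_{\infty,\infty}$, and any $g \in \mathbb{B}$ sends $\lambda g(\lambda \cdot + a) \to 0$ in distributions as $\lambda \to \infty$ for every $a \in \R^3$, the weak limit $v^\infty(\cdot,\sigma_\infty)$ satisfies $\norm{v^\infty(\cdot,\sigma_\infty)}_{\dot B^{-1}_{\infty,\infty}} \leq \epsilon$.

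To close the argument, I would choose $\epsilon = \epsilon(M)$ small enough that the combination of the $L^{3,\infty}$ bound at time $\tau_\infty$ and the Besov smallness at time $\sigma_\infty$ forces $v^\infty$ to be regular in a small parabolic cylinder about the origin, contradicting the singularity. The strategy is to propagate the Besov smallness from $\sigma_\infty$ into smallness of the Calder\'on-type scale-invariant quantities $C(Q(r)) + D(Q(r))$ and then apply the CKN $\epsilon$-regularity criterion. I expect the main obstacle to be carrying out this propagation: since the Besov smallness lies at a time \emph{later} than the singularity, no forward Koch--Tataru Cauchy problem can be applied directly, and one must instead exploit weak continuity of $v^\infty$ in $\dot B^{-1}_{\infty,\infty}$ near $\sigma_\infty$ together with the stability theory of \cite{barkersereginsverakstability} and the $L^{3,\infty}$ control at $\tau_\infty$. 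Once the singularity is excluded, the contradiction is immediate; the resulting scale-invariant bound $\sqrt{|t_k|/2}\norm{v}_{L^\infty(Q(\sqrt{|t_k|/2}))} = O(1)$ then forces $\norm{v}_{L^\infty(Q(\lambda_k))} = O(1/\lambda_k) \to 0$, so $v \equiv 0$.
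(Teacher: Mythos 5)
Your overall architecture matches the paper's: argue by contradiction, rescale so that the $L^{3,\infty}$ bound sits at a fixed earlier time and the $\dot B^{-1}_{\infty,\infty}$ information at a fixed later time, invoke the compactness theory for weak $L^{3,\infty}$ solutions, produce a singular point in the limit via Proposition~\ref{pro:appearanceofsingularity}, and contradict this using the Besov smallness. However, there is a genuine gap at exactly the step you flag as ``the main obstacle.'' The paper does not close this step by propagating the Besov smallness at the final time into smallness of $C(Q(r))+D(Q(r))$ and then applying CKN $\epsilon$-regularity; that route cannot work, because smallness of $v^\infty(\cdot,\sigma_\infty)$ in $\dot B^{-1}_{\infty,\infty}$ at a single time \emph{after} the putative singular time carries no local, perturbative information backward to cylinders ending at the singular time. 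Instead the paper invokes Proposition~\ref{pro:auxiliarypro} (imported from the weak Besov solution theory and proven there by contradiction and \emph{backward uniqueness} arguments in the spirit of Escauriaza--Seregin--{\v S}ver{\'a}k): a weak $L^{3,\infty}$ solution with $\norm{v(\cdot,0)}_{L^{3,\infty}} \leq M$ and $\norm{v(\cdot,1)}_{\dot B^{-1}_{\infty,\infty}} \leq \epsilon_0(M)$ is essentially bounded on $\R^3 \times ]1/4,1[$. This global, non-perturbative input is the heart of the proof, and your proposal leaves it unestablished; without it (or an equivalent), the contradiction does not close.

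Two secondary points. First, your re-centering at a near-maximum point $z_k^\ast$ is an unnecessary complication that actively interferes with the key step: it makes the time $\sigma_k$ at which the Besov data lives vary with $k$ (possibly degenerating to the singular time itself), and it requires the translated rescalings $\lambda_k g(\lambda_k \cdot + x_k^\ast)$ of elements $g \in \mathbb{B}$ to vanish distributionally, which is not literally what membership in $\mathbb{B}$ gives (that is stated for $\lambda g(\lambda\cdot)$, untranslated). The paper rescales about the space-time origin, so the $L^{3,\infty}$ bound lands at time $-1$ and the Besov data at time $0$, exactly in the normalized form needed for Proposition~\ref{pro:auxiliarypro}; the singular point of the limit is then located somewhere in $\overline{Q(1/2)}$ by combining $\norm{v^{(k)}}_{L^\infty(Q(1/2))} \to \infty$ with a covering argument and Proposition~\ref{pro:appearanceofsingularity}. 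Second, your final deduction ``$\norm{v}_{L^\infty(Q(\lambda_k))} = O(1/\lambda_k) \to 0$, so $v \equiv 0$'' is fine and agrees with the paper's conclusion, but it is downstream of the missing step.
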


We will use the theory of weak $L^{3,\infty}$ solutions developed in~\cite{barkersereginsverakstability}. These are defined to be suitable weak solutions of the Navier-Stokes equations with initial data $u_0 \in L^{3,\infty}$ that additionally satisfy a decomposition $v = V + U$, where $V(\cdot,t) = S(t) u_0$ is the Stokes evolution of the initial data and $U$ belongs to the energy space with $\norm{U(\cdot,t)}_{L^2} \to 0$ as $t \to 0^+$. We will also use the following proposition, which is proven in~\cite{globalweakbesov} by contradiction and backward uniqueness arguments.

\begin{proposition}[Auxiliary proposition]
	\label{pro:auxiliarypro}
	For all $M>0$, there exists a constant $\epsilon_0 = \epsilon_0(M) > 0$ satisfying the following property. Suppose that $v$ is a weak $L^{3,\infty}$ solution on $\R^3 \times ]0,1[$ satisfying
	\begin{equation}
	\norm{v(\cdot,0)}_{L^{3,\infty}} \leq M
	\end{equation}
	and
	\begin{equation}
	\norm{v(\cdot,1)}_{\dot B^{-1}_{\infty,\infty}} \leq \epsilon_0.
	\end{equation}
	Then
	\begin{equation}
	v \text{ is essentially bounded in } \R^3 \times ]1/4,1[.
	\end{equation}
\end{proposition}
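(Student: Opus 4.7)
The plan is a proof by contradiction combining rescaling, compactness for weak $L^{3,\infty}$ solutions from~\cite{barkersereginsverakstability}, Proposition~\ref{pro:appearanceofsingularity}, and Proposition~\ref{pro:auxiliarypro}. First, the conclusion $v \equiv 0$ reduces to the quantitative bound: once $\limsup_k \sqrt{|t_k|/2}\,\norm{v}_{L^\infty(Q(\sqrt{|t_k|/2}))} \leq C < \infty$ is established, then $\norm{v}_{L^\infty(Q(\sqrt{|t_k|/2}))} \leq 2C/\sqrt{|t_k|/2} \to 0$ along the subsequence, and since the parabolic balls $Q(\sqrt{|t_k|/2})$ exhaust $\R^3 \times \,]-\infty,0[$, this forces $v \equiv 0$.

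Suppose for contradiction that, along a subsequence, $\sqrt{|t_k|/2}\,\norm{v}_{L^\infty(Q(\sqrt{|t_k|/2}))} \to \infty$. With $\mu_k = \sqrt{|t_k|/2}$, define
\[
v^{(k)}(y,\tau) = \mu_k\, v(\mu_k y,\, \mu_k^2 \tau),
\]
which is again a mild ancient solution. Scale invariance of $L^{3,\infty}$ gives $\norm{v^{(k)}(\cdot,-2)}_{L^{3,\infty}} = \norm{v(\cdot,t_k)}_{L^{3,\infty}} \leq M$, while $\norm{v^{(k)}}_{L^\infty(Q)} \to \infty$ by assumption. Regarding $v^{(k)}|_{\R^3 \times \,]-2,0[}$ as a weak $L^{3,\infty}$ solution with initial data $v^{(k)}(\cdot,-2)$ and invoking the stability/compactness theory of~\cite{barkersereginsverakstability}, I extract a subsequence converging to a weak $L^{3,\infty}$ solution $v^{(\infty)}$ on $\R^3 \times \,]-2,0[$. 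Persistence of singularities (Proposition~\ref{pro:appearanceofsingularity}), applied with the standard Calder\'on-Zygmund pressure decomposition, then forces $v^{(\infty)}$ to be singular at the space-time origin.

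The heart of the argument is to show $\norm{v^{(\infty)}(\cdot,0)}_{\dot B^{-1}_{\infty,\infty}} \leq \epsilon_0(M)$, with the choice $\epsilon = \epsilon(M) \leq \epsilon_0(M)$. Pick $f \in \mathbb{B}$ so that $g := v(\cdot,0) - f$ satisfies $\norm{g}_{\dot B^{-1}_{\infty,\infty}} \leq \epsilon$. The rescaled terminal data splits as
\[
v^{(k)}(\cdot,0) = \mu_k g(\mu_k \cdot) + \mu_k f(\mu_k \cdot).
\]
The $g$-term has $\dot B^{-1}_{\infty,\infty}$ norm $\leq \epsilon$ uniformly in $k$ by scale invariance, and its weak-$\ast$ subsequential limit respects this bound by lower semicontinuity. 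The $f$-term is handled using the defining property $f(\lambda \cdot) \to 0$ in $\mathcal{D}'$ of $\mathbb{B}$, paired with the Stokes-plus-energy decomposition of $v^{(\infty)}$ (which yields a well-defined distributional trace at $t = 0$ compatible with the convergence of $v^{(k)}$), to conclude that the $f$-contribution to the limit vanishes in the relevant sense.

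With $\norm{v^{(\infty)}(\cdot,-2)}_{L^{3,\infty}} \leq M$ (weak lower semicontinuity) and $\norm{v^{(\infty)}(\cdot,0)}_{\dot B^{-1}_{\infty,\infty}} \leq \epsilon_0(M)$ in hand, a time-shifted and rescaled form of Proposition~\ref{pro:auxiliarypro} (sending the interval $]-2,0[$ to $]0,1[$) delivers $v^{(\infty)} \in L^\infty(\R^3 \times \,]-3/2, 0[)$, contradicting the singularity at the origin and completing the proof. The main obstacle is precisely this identification step: scale invariance easily treats the $g$-part, but the absence of norm convergence of $\mu_k f(\mu_k \cdot)$ to zero means the merely distributional vanishing of $f(\lambda \cdot)$ must be carefully combined with the structural properties of weak $L^{3,\infty}$ solutions to propagate the smallness to the limit.
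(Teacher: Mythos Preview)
Your proposal does not prove Proposition~\ref{pro:auxiliarypro}; it proves Theorem~\ref{thm:moregeneralliouville} instead. The hypotheses you invoke---a mild ancient solution, a sequence $t_k \dto -\infty$, the conclusion $v \equiv 0$---belong to the Liouville theorem, not to the auxiliary proposition, whose setting is a single weak $L^{3,\infty}$ solution on $\R^3 \times ]0,1[$ with prescribed bounds at the endpoints $t=0$ and $t=1$. Most tellingly, your final step explicitly invokes ``a time-shifted and rescaled form of Proposition~\ref{pro:auxiliarypro},'' so as a purported proof of that proposition the argument is circular.

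The paper itself does not supply a proof of Proposition~\ref{pro:auxiliarypro}; it quotes the result from~\cite{globalweakbesov}, noting only that the proof there proceeds ``by contradiction and backward uniqueness arguments.'' That argument is of Escauriaza--Seregin--{\v S}ver{\'a}k type: one assumes a sequence of counterexamples with $\epsilon_0 \to 0$, passes to a limiting weak $L^{3,\infty}$ solution that still carries a singularity but has vanishing terminal $\dot B^{-1}_{\infty,\infty}$ norm, and then uses backward uniqueness for the heat operator together with unique continuation and partial regularity to force the limit to vanish identically, contradicting the singularity. None of the ingredients you deploy---the rescaling by $\mu_k = \sqrt{|t_k|/2}$, the splitting via $f \in \mathbb{B}$, persistence of singularities---address this mechanism; they are precisely the machinery of the paper's proof of Theorem~\ref{thm:moregeneralliouville}, in which Proposition~\ref{pro:auxiliarypro} enters only as a black box.
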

In fact, one may give pointwise bounds for $v$ on $\R^3 \times ]1/4,1[$, but this will not be necessary.

\begin{proof}[Proof of Theorem~\ref{thm:moregeneralliouville}]
	Suppose otherwise. That is, there exists a mild ancient solution $v$ satisfying
	\begin{equation}
	\norm{v(\cdot,t_{k})}_{L^{3,\infty}} \leq M
	\end{equation}
	for a sequence $t_k \dto -\infty$,
	\begin{equation}
	\label{eq:distance}
	{\rm dist}_{\dot B^{-1}_{\infty,\infty}}(v(\cdot,0),\mathbb{B}) \leq \epsilon_0/2,
	\end{equation}
	with $\epsilon_0 = \epsilon_0(M) > 0$ as in Proposition~\ref{pro:auxiliarypro}, and
	\begin{equation}
	\limsup_{k \to \infty} \sqrt{|t_{k}|/2} \norm{v}_{L^\infty(Q(\sqrt{|t_{k}|/2}))} = \infty.
	\end{equation}
Regarding \eqref{eq:distance}, we decompose $v(\cdot,0) = U + W$,
where $U \in \mathbb{B}$ and $\norm{W}_{\dot B^{-1}_{\infty,\infty}} \leq \epsilon_0$.

	We construct a sequence $(v^{(k)})_{k \in \N}$ of mild solutions on $\R^3 \times ]-1,0[$ by rescaling appropriately:
	\begin{equation}
		v^{(k)}(x,t) = \sqrt{|t_{k}|} v(\sqrt{|t_{k}|} x, |t_{k}| t).
	\end{equation}
	Since $v$ is mild, it is not difficult to show that $v^{(k)}$ is a weak $L^{3,\infty}$ solution on $\R^3 \times ]-1,0[$.
	Moreover,
	\begin{equation}
	\norm{v^{(k)}(\cdot,-1)}_{L^{3,\infty}} \leq M,
	\end{equation}
	\begin{equation}
	\label{eq:rescalingforvat0}
	v^{(k)}(\cdot,0) = \sqrt{|t_{k}|} v^{(k)}(\sqrt{|t_{k}|} \cdot,0) = U^{(k)} + W^{(k)},
	\end{equation}
	where $U^{(k)}$ and $W^{(k)}$ correspond to $U$ and $W$, appropriately rescaled,
	and
	\begin{equation}
	\label{eq:vkgrowing}
	\norm{v^{(k)}}_{L^\infty(Q(1/2))} \to \infty.
	\end{equation}
	Regarding~\eqref{eq:rescalingforvat0}, we find that $U^{(k)} \to 0$ in the sense of distributions and $\norm{W^{(k)}}_{\dot B^{-1}_{\infty,\infty}} \leq \epsilon_0$. Hence, there exists $W^\infty$ satisfying $\norm{W^\infty}_{\dot B^{-1}_{\infty,\infty}} \leq \epsilon_0$ and
	\begin{equation}
	v^{(k)}(\cdot,0) \to W^\infty \text{ in the sense of distributions}
	\end{equation}
	along a subsequence.

	Next, we recall a compactness result for the above sequence of weak $L^{3,\infty}$ solutions (see~\cite{barkersereginsverakstability,globalweakbesov}). There exists a weak $L^{3,\infty}$ solution $v^\infty$ on $\R^3 \times ]-1,0[$ and a subsequence such that
	\begin{equation}
		v^{(k)}(\cdot,-1) \wstar u(\cdot,-1) \text{ in } L^{3,\infty},
	\end{equation}
	where $\norm{u(\cdot,-1)}_{L^{3,\infty}} \leq M$,
	\begin{equation}
		v^{(k)} \to v^\infty \text{ in } L^{3}_{\loc}(\R^3 \times ]-1,0]),
	\end{equation}
	\begin{equation}
	q^{(k)} \wto q^\infty \text{ in } L^{\frac{3}{2}}_{\loc}(\R^3 \times ]-1,0]),
	\end{equation}
	and
	\begin{equation}
		v^{(k)}(\cdot,0) \to v^\infty(\cdot,0) \text{ in the sense of distributions}.
	\end{equation}
	In particular,
	\begin{equation}
		v^\infty(\cdot,0) = W^\infty.
	\end{equation}
	By Proposition~\ref{pro:auxiliarypro}, $v^\infty$ is essentially bounded in $\R^3 \times ]-3/4,0[$.

	We claim that $v^\infty$ has a singular point $z^* \in \overline{Q(1/2)}$. Indeed, due to~\eqref{eq:vkgrowing}, we have
	\begin{equation}
	\limsup_{k \to \infty} \norm{v^{(k)}}_{L^\infty(Q(z^*,R))} = \infty \text{ for all } 0 < R < 1/4,
	\end{equation}
	for some $z^* \in \overline{Q(1/2)}$, and we may invoke Proposition~\ref{pro:appearanceofsingularity}. This contradicts that $v^\infty$ is essentially bounded in $\R^3 \times ]-3/4,0[$ and completes the proof.
\end{proof}

We conclude with a few remarks:
\begin{remark}
The proof also implies that, if there exists a non-trivial mild ancient solution satisfying~\eqref{vboundedseqtimes}, then there exists a singular weak $L^{3,\infty}$ solution $v^\infty$ on $\R^3 \times ]-1,0[$. By considering the energy-class correction $u^\infty(\cdot,t) = v^\infty - S(t) v(\cdot,-1)$
after the initial time,
one obtains a singular weak Leray-Hopf solution with subcritical forcing term.

Using the theory of weak Besov solutions developed in~\cite{globalweakbesov}, similar statements hold when $L^{3,\infty}$ is replaced by $\dot{B}^{-1+\frac{3}{p}}_{p,\infty}$ and when $L^{3}$ is replaced by $\dot{B}^{-1+\frac{3}{p}}_{p,p}$ ($p>3$). While similar results remain unknown in $\BMO^{-1}$, a mild ancient solution satisfying $\norm{v(\cdot,t_k)}_{\BMO^{-1}} \to 0$ as $t_k \to -\infty$ must be identically zero. This follows from the perturbation theory in~\cite{kochtataru}.

Similar statements seem to hold \emph{mutatis mutandis} in the half-space with a different decomposition of the pressure, e.g., the one in~\cite{barkerser16blowup} (see also the weak $L^3(\R^3_+)$ solution theory developed in~\cite{Tuanthesis}). It is interesting to note that, in the half-space case, one has the option to zoom out on an interior or boundary point.
\end{remark}

\subsubsection*{Acknowledgments}
The authors would like to thank Gregory Seregin, Vladim{\i}r {\v S}ver{\'a}k, and Julien Guillod for helpful suggestions on a preliminary version of the paper. DA was supported by the NDSEG Graduate Fellowship and a travel grant from the Council of Graduate Students at the University of Minnesota.

\bibliographystyle{plain}
\bibliography{JMFMsubmission}

\begin{thebibliography}{10}

\bibitem{albrittonbarkerlocalregII}
D.~{Albritton} and T.~{Barker}.
\newblock {Localised necessary conditions for singularity formation in the
  Navier-Stokes equations with curved boundary}.
\newblock {\em ArXiv e-prints}, November 2018.

\bibitem{albrittonblowupcriteria}
Dallas Albritton.
\newblock Blow-up criteria for the {N}avier--{S}tokes equations in non-endpoint
  critical {B}esov spaces.
\newblock {\em Anal. PDE}, 11(6):1415--1456, 2018.

\bibitem{globalweakbesov}
Dallas Albritton and Tobias Barker.
\newblock Global weak {B}esov solutions of the {N}avier--{S}tokes equations and
  applications.
\newblock {\em Archive for Rational Mechanics and Analysis}, Oct 2018.

\bibitem{barkersereginmbas}
T.~Barker and G.~Seregin.
\newblock Ancient solutions to {N}avier-{S}tokes equations in half space.
\newblock {\em J. Math. Fluid Mech.}, 17(3):551--575, 2015.

\bibitem{barkerser16blowup}
T.~Barker and G.~Seregin.
\newblock A necessary condition of potential blowup for the {N}avier--{S}tokes
  system in half-space.
\newblock {\em Mathematische Annalen}, 369(3):1327--1352, Dec 2017.

\bibitem{barkersereginsverakstability}
T.~Barker, G.~Seregin, and V.~{\v S}ver{\'a}k.
\newblock On stability of weak {N}avier--{S}tokes solutions with large
  ${L}^{3,\infty}$ initial data.
\newblock {\em Communications in Partial Differential Equations}, 0(0):1--24,
  2018.

\bibitem{calderon90}
Calixto~P. Calder\'on.
\newblock Existence of weak solutions for the {N}avier-{S}tokes equations with
  initial data in {$L^p$}.
\newblock {\em Trans. Amer. Math. Soc.}, 318(1):179--200, 1990.

\bibitem{carrillo2018decay}
Bryan Carrillo, Xinghong Pan, and Qi~S Zhang.
\newblock Decay and vanishing of some axially symmetric {D}-solutions of the
  {N}avier-{S}tokes equations.
\newblock {\em arXiv preprint arXiv:1801.07420}, 2018.

\bibitem{chaewolfasymptoticallyselfsimilar}
Dongho Chae and J{\"o}rg Wolf.
\newblock On the {L}iouville type theorems for self-similar solutions to the
  {N}avier--{S}tokes equations.
\newblock {\em Archive for Rational Mechanics and Analysis}, 225(1):549--572,
  Jul 2017.

\bibitem{degiorgi65}
Ennio De~Giorgi.
\newblock Una estensione del teorema di {B}ernstein.
\newblock {\em Ann. Scuola Norm. Sup. Pisa (3)}, 19:79--85, 1965.

\bibitem{dongducritical}
Hongjie Dong and Dapeng Du.
\newblock The {N}avier-{S}tokes equations in the critical {L}ebesgue space.
\newblock {\em Comm. Math. Phys.}, 292(3):811--827, 2009.

\bibitem{escauriazasereginsverak}
L.~Escauriaza, G.~Seregin, and V.~{\v S}ver{\'a}k.
\newblock {$L_{3,\infty}$}-solutions of {N}avier-{S}tokes equations and
  backward uniqueness.
\newblock {\em Uspekhi Mat. Nauk}, 58(2(350)):3--44, 2003.

\bibitem{gigahsumaekawaplaner}
Yoshikazu Giga, Pen-Yuan Hsu, and Yasunori Maekawa.
\newblock A {L}iouville theorem for the planer {N}avier-{S}tokes equations with
  the no-slip boundary condition and its application to a geometric regularity
  criterion.
\newblock {\em Communications in Partial Differential Equations},
  39(10):1906--1935, 2014.

\bibitem{gigakohnasymptoticallyselfsimilar}
Yoshikazu Giga and Robert~V. Kohn.
\newblock Asymptotically self-similar blow-up of semilinear heat equations.
\newblock {\em Comm. Pure Appl. Math.}, 38(3):297--319, 1985.

\bibitem{guevaraphucselfsim}
Cristi Guevara and Nguyen~Cong Phuc.
\newblock Leray's self-similar solutions to the {N}avier-{S}tokes equations
  with profiles in {M}arcinkiewicz and {M}orrey spaces.
\newblock {\em SIAM J. Math. Anal.}, 50(1):541--556, 2018.

\bibitem{Jiasver2013}
Hao Jia and Vladim{\'i}r {\v S}ver{\'a}k.
\newblock Minimal {$L^3$}-initial data for potential {N}avier-{S}tokes
  singularities.
\newblock {\em SIAM J. Math. Anal.}, 45(3):1448--1459, 2013.

\bibitem{kimkozonoweakspaces}
Hyunseok Kim and Hideo Kozono.
\newblock Interior regularity criteria in weak spaces for the {N}avier-{S}tokes
  equations.
\newblock {\em Manuscripta Math.}, 115(1):85--100, 2004.

\bibitem{kochnadirashvili}
Gabriel Koch, Nikolai Nadirashvili, Gregory~A. Seregin, and Vladim{\'i}r {\v
  S}ver{\'a}k.
\newblock Liouville theorems for the {N}avier-{S}tokes equations and
  applications.
\newblock {\em Acta Math.}, 203(1):83--105, 2009.

\bibitem{kochtataru}
Herbert Koch and Daniel Tataru.
\newblock Well-posedness for the {N}avier-{S}tokes equations.
\newblock {\em Adv. Math.}, 157(1):22--35, 2001.

\bibitem{kukavicarusinzianeJMFM2017}
Igor Kukavica, Walter Rusin, and Mohammed Ziane.
\newblock An anisotropic partial regularity criterion for the {N}avier-{S}tokes
  equations.
\newblock {\em J. Math. Fluid Mech.}, 19(1):123--133, 2017.

\bibitem{lei2019ancient}
Zhen Lei and Qi~S Zhang.
\newblock On ancient periodic solutions to axially-symmetric {N}avier-{S}tokes
  equations.
\newblock {\em arXiv preprint arXiv:1902.11229}, 2019.

\bibitem{lin}
Fanghua Lin.
\newblock A new proof of the {C}affarelli-{K}ohn-{N}irenberg theorem.
\newblock {\em Comm. Pure Appl. Math.}, 51(3):241--257, 1998.

\bibitem{necasruzsverak}
J.~Ne{\v c}as, M.~R{\r u}{\v z}i{\v c}ka, and V.~{\v S}ver{\'a}k.
\newblock On {L}eray's self-similar solutions of the {N}avier-{S}tokes
  equations.
\newblock {\em Acta Math.}, 176(2):283--294, 1996.

\bibitem{neustupapenel1999}
Ji\v{r}\'{i} Neustupa and Patrick Penel.
\newblock Regularity of a suitable weak solution to the {N}avier-{S}tokes
  equations as a consequence of regularity of one velocity component.
\newblock In {\em Applied nonlinear analysis}, pages 391--402. Kluwer/Plenum,
  New York, 1999.

\bibitem{Tuanthesis}
Tuan Pham.
\newblock Topics in the regularity theory of the navier–stokes equations.
\newblock 2018.
\newblock Ph.D. Thesis -- University of Minnesota.

\bibitem{rusinsver}
W.~Rusin and V.~{\v S}ver{\'a}k.
\newblock Minimal initial data for potential {N}avier-{S}tokes singularities.
\newblock {\em J. Funct. Anal.}, 260(3):879--891, 2011.

\bibitem{schoenuhlenbeckregularitytheory}
Richard Schoen and Karen Uhlenbeck.
\newblock A regularity theory for harmonic maps.
\newblock {\em J. Differential Geom.}, 17(2):307--335, 1982.

\bibitem{sereginliouvillehalfspacel2infty}
G.~Seregin.
\newblock Liouville theorem for 2{D} {N}avier-{S}tokes equations in a half
  space.
\newblock {\em J. Math. Sci. (N.Y.)}, 210(6):849--856, 2015.

\bibitem{sereginsverakaxisymmetric}
G.~Seregin and V.~{\v S}ver{\'a}k.
\newblock On type {I} singularities of the local axi-symmetric solutions of the
  {N}avier-{S}tokes equations.
\newblock {\em Comm. Partial Differential Equations}, 34(1-3):171--201, 2009.

\bibitem{sereginsverakweaksols}
G.~Seregin and V.~{\v S}ver{\'a}k.
\newblock On global weak solutions to the {C}auchy problem for the
  {N}avier-{S}tokes equations with large {$L_3$}-initial data.
\newblock {\em Nonlinear Anal.}, 154:269--296, 2017.

\bibitem{sereginsverakrescalinghalfspace}
G.~Seregin and V.~\v{S}ver\'{a}k.
\newblock Rescalings at possible singularities of {N}avier-{S}tokes equations
  in half-space.
\newblock {\em Algebra i Analiz}, 25(5):146--172, 2013.

\bibitem{sereginzhou2018}
G.~{Seregin} and D.~{Zhou}.
\newblock Regularity of solutions to the navier-stokes equations in
  $\dot{B}_{\infty,\infty}^{-1}$.
\newblock {\em ArXiv e-prints}, February 2018.

\bibitem{SereginCriticalMorrey2006}
G.~A. Seregin.
\newblock Estimates of suitable weak solutions to the {N}avier-{S}tokes
  equations in critical {M}orrey spaces.
\newblock {\em Zap. Nauchn. Sem. S.-Peterburg. Otdel. Mat. Inst. Steklov.
  (POMI)}, 336(Kraev. Zadachi Mat. Fiz. i Smezh. Vopr. Teor. Funkts.
  37):199--210, 277, 2006.

\bibitem{sereginshilkinliouvillesurvey}
G.~A. Seregin and T.~N. Shilkin.
\newblock Liouville-type theorems for the {N}avier–{S}tokes equations.
\newblock {\em Russian Mathematical Surveys}, 73(4):661, 2018.

\bibitem{sereginnotes}
Gregory Seregin.
\newblock {\em Lecture notes on regularity theory for the {N}avier-{S}tokes
  equations}.
\newblock World Scientific Publishing Co. Pte. Ltd., Hackensack, NJ, 2015.

\bibitem{seregintypeihalfspace}
Gregory {Seregin}.
\newblock {On Type I Blowups of Suitable Weak Solutions to Navier-Stokes
  Equations near Boundary}.
\newblock {\em arXiv e-prints}, page arXiv:1901.08842, Jan 2019.

\bibitem{sereginsverakhandbook}
Gregory Seregin and Vladimir {\v{S}}ver{\'a}k.
\newblock {\em Regularity Criteria for Navier-Stokes Solutions}, pages 1--38.
\newblock Springer International Publishing, Cham, 2016.

\bibitem{taolocalizationcompactness}
Terence Tao.
\newblock Localisation and compactness properties of the {N}avier-{S}tokes
  global regularity problem.
\newblock {\em Anal. PDE}, 6(1):25--107, 2013.

\bibitem{tsai}
Tai-Peng Tsai.
\newblock On {L}eray's self-similar solutions of the {N}avier-{S}tokes
  equations satisfying local energy estimates.
\newblock {\em Arch. Rational Mech. Anal.}, 143(1):29--51, 1998.

\bibitem{SereginZajaczkowskiPOMI2006}
W.~Zajaczkowski and G.~A. Seregin.
\newblock A sufficient condition of local regularity for the {N}avier-{S}tokes
  equations.
\newblock {\em Zap. Nauchn. Sem. S.-Peterburg. Otdel. Mat. Inst. Steklov.
  (POMI)}, 336(Kraev. Zadachi Mat. Fiz. i Smezh. Vopr. Teor. Funkts.
  37):46--54, 274, 2006.

\end{thebibliography}

\end{document}